\author{
  \textbf{Pierre Bieliavsky}\thanks{\texttt{pierre.bieliavsky@uclouvain.be}}\\[0.3cm]
  Faculté des sciences\\
  Ecole de mathématique (MATH)\\
  Institut de recherche en mathématique et physique (IRMP)\\
  Chemin du Cyclotron 2 bte L7.01.02 \\
  1348 Louvain-la-Neuve\\
  Belgium\\[1cm]
  \textbf{Chiara Esposito}\thanks{\texttt{chiara.esposito@mathematik.uni-wuerzburg.de}},
  \textbf{Stefan Waldmann}\thanks{\texttt{stefan.waldmann@mathematik.uni-wuerzburg.de}},
  \textbf{Thomas Weber}\thanks{\texttt{thomas.weber@stud-mail.uni-wuerzburg.de}}\\[0.3cm]
  Institut für Mathematik \\
  Lehrstuhl für Mathematik X \\
  Universität Würzburg \\
  Campus Hubland Nord \\
  Emil-Fischer-Straße 31 \\
  97074 Würzburg \\
  Germany
}
\renewcommand{\mathbb}[1]{\mathbbm{#1}}
\numberwithin{equation}{section}
\renewcommand{\arraystretch}{1.2}
\let\originalleft\left
\let\originalright\right
\renewcommand{\left}{\mathopen{}\mathclose\bgroup\originalleft}
\renewcommand{\right}{\aftergroup\egroup\originalright}
\newtheorem{lemma}{Lemma}[section]
\newtheorem{proposition}[lemma]{Proposition}
\newtheorem{theorem}[lemma]{Theorem}
\newtheorem{corollary}[lemma]{Corollary}
\newtheorem{definition}[lemma]{Definition}
\newtheorem{example}[lemma]{Example}
\newtheorem{remark}[lemma]{Remark}
\def\theorem@checkbold{}
\theoremstyle{nonumberplain}
\newtheorem{proof}{Proof}
\newenvironment{propositionlist}{\begin{compactenum}[\itshape i.)]}{\end{compactenum}}
\newenvironment{definitionlist}{\begin{compactenum}[\itshape i.)]}{\end{compactenum}}
\newcommand{\lie}[1]          {\mathfrak{#1}}
\newcommand{\tensor}[1][{}]           {\mathbin{\otimes_{\scriptscriptstyle{#1}}}}
\newcommand{\algebra}[1]      {\mathscr{#1}}
\newcommand{\acts}            {\mathbin{\triangleright}}
\newcommand{\Cinfty}         {\mathscr{C}^\infty}
\newcommand{\Secinfty}       {\Gamma^\infty}
\DeclarePairedDelimiter{\Schouten}{\llbracket}{\rrbracket}
\newcommand{\argument}       {\,\cdot\,}
\newcommand{\Anti}                    {\Lambda}
\newcommand{\opp}            {\mathrm{opp}}
\newcommand{\D}              {\mathop{}\!\mathrm{d}}
\DeclareMathOperator{\spann} {\mathrm{span}}
\newcommand{\id}             {\mathsf{id}}
\title{Obstructions for Twist Star Products}
\date{July 2016}
\begin{document}

%
%

\maketitle

%
%

\begin{abstract}
    In this short note we point out that not every star product is
    induced by a Drinfel'd twist by showing that not every Poisson
    structure is induced by a classical $r$-matrix. Examples include
    the higher genus symplectic Pretzel surfaces and the symplectic
    sphere $\mathbb{S}^2$.
\end{abstract}

\newpage

%
%

\tableofcontents

%
%

\section{Introduction}
\label{sec:Introduction}

Deformation quantization as introduced in \cite{bayen.et.al:1978a} has
found many applications in mathematical physics beyond the original
scope of quantizing classical mechanical systems. In particular, the
formal star products provide examples of noncommutative manifolds as
needed in noncommutative geometry \cite{connes:1994a}. Here one finds
many approaches to fundamental questions of the physical nature of the
geometry of spacetime.

While for generic star products certain desirable constructions will
not be possible, there are particular classes of star products with
better behaviour: star products induced by a Drinfel'd twist by means
of universal deformation formulas. Recall that for a formal Drinfel'd
twist
$\mathcal{F} \in (\mathcal{U}(\lie{g}) \tensor
\mathcal{U}(\lie{g}))[[\hbar]]$
one can deform every algebra $\algebra{A}$ on which the Lie algebra
$\lie{g}$ acts by derivations. Indeed, for
$a, b \in \algebra{A}[[\hbar]]$ we can define the product
\begin{equation}
    \label{eq:TheUDF}
    a \star_{\mathcal{F}} b
    =
    m (\mathcal{F}^{-1} \acts (a \tensor b)),
\end{equation}
where
$m\colon (\algebra{A} \tensor \algebra{A})[[\hbar]] \longrightarrow
\algebra{A}[[\hbar]]$
is the undeformed multiplication of $\algebra{A}$, extended
$\hbar$-bilinearly, and $\acts$ denotes the Lie algebra action
extended to an action of the universal enveloping algebra as usual.
The conditions on $\mathcal{F}$ guarantee that $\star_{\mathcal{F}}$
is again an associative product whenever $m$ was associative.

Beside this simple and universal formula, deformations by means of a
twist allow for many additional constructions. In particular, every
module over the undeformed algebra on which the Lie algebra acts as
well, can be deformed into a module over the deformed algebra. In
noncommutative geometry this aspect is of great importance: here we
consider as algebra to start with the functions
$\algebra{A} = \Cinfty(M)$ on a manifold $M$. Then a Lie algebra
action on $\algebra{A}$ is the same thing as a Lie algebra action by
vector fields on $M$. Hence this action lifts to all tensor bundles
over $M$. Therefore we can deform the sections of all tensor bundles
over $M$ in a coherent way. For a general star product this would not
be possible in such a nice way: instead, the sections of every vector
bundle have to be deformed individually but with no significant
relations between different bundles.

While providing nice additional features, the question has to be
raised whether such star products occur rarely or often: one has of
course many examples at hand but it is not clear whether there are
hard obstructions for a given star product to be induced by a
twist. Moreover, recall that a star product induces as semiclassical
limit on $M$ a Poisson structure. Thus a natural question is whether a
given Poisson structure allows for a star product induced by a twist.

In this work we provide several obstructions for this to be
possible. First, we have some quite general obstructions in the case
when the Poisson bracket is symplectic:
\begin{theorem}
    \label{theorem:TheReallyCoolTheorem}%
    Given a connected and compact symplectic manifold $(M, \omega)$
    endowed with a star product induced by a twist of some Lie
    algebra. Then $M$ is homogeneous and there exists a non-degenerate
    $r$-matrix for a (possibly different) Lie algebra whose Lie group
    acts transitively and effectively on $M$.
\end{theorem}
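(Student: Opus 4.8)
The plan is to extract from the twist a classical $r$-matrix on $\lie{g}$, restrict it to a finite-dimensional Lie subalgebra on which it is non-degenerate, show that this subalgebra acts effectively and---using that $\omega$ is symplectic---transitively, and then integrate the action using compactness. To set up: the star product is $\star_{\mathcal{F}}$ for a formal twist $\mathcal{F} = 1 \otimes 1 + \hbar \mathcal{F}_1 + \cdots \in (\mathcal{U}(\lie{g}) \otimes \mathcal{U}(\lie{g}))[[\hbar]]$, and the $\lie{g}$-action on $\Cinfty(M)$ by derivations is a Lie algebra morphism $\varrho \colon \lie{g} \to \Secinfty(TM)$, which I extend to a morphism of Schouten algebras $\varrho \colon \Anti^\bullet \lie{g} \to \Secinfty(\Anti^\bullet TM)$. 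It is standard that the skew-symmetrisation of $\mathcal{F}_1$ determines a classical $r$-matrix $r \in \Anti^2 \lie{g}$, i.e.\ $\Schouten{r, r} = 0$, and that the Poisson bracket of $\star_{\mathcal{F}}$ is $\varrho(r)$ evaluated on differentials, up to a nonzero scalar. Since $(M, \omega)$ is symplectic, the bivector $\pi := \varrho(r)$ equals $\omega^{-1}$ and is nowhere degenerate; in particular $r \neq 0$.

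Next I would pass to a faithful action and isolate the relevant subalgebra. The kernel $\lie{k} = \ker \varrho$ is an ideal of $\lie{g}$, so $\varrho$ descends to an injective morphism $\overline{\varrho} \colon \lie{g}' := \lie{g}/\lie{k} \hookrightarrow \Secinfty(TM)$, and $r$ descends to $r' \in \Anti^2 \lie{g}'$ with $\Schouten{r', r'} = 0$, $\overline{\varrho}(r') = \pi$, and $r' \neq 0$. Set $\lie{h} := \operatorname{im}((r')^\sharp) \subseteq \lie{g}'$, where $(r')^\sharp \colon (\lie{g}')^* \to \lie{g}'$ is the contraction with $r'$; since $r'$ is a finite sum of decomposable bivectors, $\lie{h}$ is finite-dimensional. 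I would then invoke the standard facts about triangular $r$-matrices: because $\Schouten{r', r'} = 0$, the subspace $\lie{h}$ is a Lie subalgebra of $\lie{g}'$, and $r'$ lies in $\Anti^2 \lie{h}$, where it is \emph{non-degenerate}---the latter since skew-symmetry forces $\ker((r')^\sharp)$ to equal the annihilator of $\operatorname{im}((r')^\sharp) = \lie{h}$. It is essential here to pass to $\lie{g}'$ \emph{before} forming this image, as otherwise non-degeneracy on the resulting subalgebra may be lost. Finally $\overline{\varrho}$ restricts to an injective morphism $\lie{h} \hookrightarrow \Secinfty(TM)$, so the $\lie{h}$-action on $M$ is effective.

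The geometric heart is transitivity, and here the symplectic hypothesis is used. For $p \in M$ let $\phi_p = \operatorname{ev}_p \circ \overline{\varrho} \colon \lie{g}' \to T_p M$ denote the evaluation of the fundamental vector fields at $p$, with transpose $\phi_p^* \colon T_p^* M \to (\lie{g}')^*$. Writing $r'$ as a finite sum of wedges, a short computation from $\pi = \overline{\varrho}(r')$ gives the factorisation $\pi_p^\sharp = \phi_p \circ (r')^\sharp \circ \phi_p^*$ of maps $T_p^* M \to T_p M$. Since $\omega$ is symplectic, $\pi_p^\sharp$ is bijective, in particular surjective, so $T_p M = \operatorname{im}(\pi_p^\sharp) \subseteq \phi_p(\operatorname{im}((r')^\sharp)) = \phi_p(\lie{h})$. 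Hence the fundamental vector fields coming from $\lie{h}$ span $T_p M$ for every $p$: the $\lie{h}$-action is infinitesimally transitive.

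Finally I would integrate. Since $M$ is compact every vector field on $M$ is complete, so by Palais' integrability theorem the $\lie{h}$-action integrates to an action of the connected, simply connected Lie group $\widetilde{H}$ with $\operatorname{Lie}(\widetilde{H}) = \lie{h}$. Infinitesimal transitivity makes every orbit open, hence also closed, hence---$M$ being connected---all of $M$; thus $\widetilde{H}$ acts transitively and $M \cong \widetilde{H}/\widetilde{H}_p$ is homogeneous. The kernel $N$ of $\widetilde{H} \to \operatorname{Diff}(M)$ has Lie algebra $\ker(\lie{h} \to \Secinfty(TM)) = \{0\}$, so $N$ is discrete, and $H := \widetilde{H}/N$ acts transitively and effectively on $M$ with $\operatorname{Lie}(H) = \lie{h}$ and with the non-degenerate $r$-matrix $r' \in \Anti^2 \lie{h} = \Anti^2 \operatorname{Lie}(H)$, which is the assertion. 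The main obstacle I anticipate is the combination of the second and third steps: choosing the correct finite-dimensional subalgebra $\lie{h}$ (noting that one must pass to $\lie{g}'$ first for $r'$ to stay non-degenerate on it), and transferring non-degeneracy of $\omega$ through the factorisation $\pi_p^\sharp = \phi_p \circ (r')^\sharp \circ \phi_p^*$ to conclude that this $\lie{h}$ already acts transitively; the integration and effectivity arguments are then routine.
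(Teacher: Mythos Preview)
Your argument is correct and follows the same strategy as the paper: extract the classical $r$-matrix from the twist, pass to the symplectic subalgebra $\lie{h}=\operatorname{im}(r^\sharp)$ on which $r$ is non-degenerate, use non-degeneracy of $\pi$ to obtain infinitesimal transitivity (your factorisation $\pi_p^\sharp=\phi_p\circ (r')^\sharp\circ\phi_p^*$ is precisely the computation in Lemma~\ref{lem:LocTransitive}), integrate via Palais on the compact $M$, and quotient for effectiveness. One minor correction: it is not ``essential'' to quotient by $\ker\varrho$ before forming $\lie{h}$, since $r$ is automatically non-degenerate on $\operatorname{im}(r^\sharp)$ in \emph{any} ambient Lie algebra (Proposition~\ref{prop:EtingofSubalg}); the paper in fact first establishes transitivity for $\lie{g}_r\subseteq\lie{g}$ and only afterwards quotients and re-applies the subalgebra construction (Corollary~\ref{corollary:NicyfyTheAction})---your ordering simply achieves both in one pass.
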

Using this theorem, we show that the general obstructions can be
realized in many explicit examples like for the symplectic sphere
$\mathbb{S}^2$ or all higher genus Pretzel surfaces.

The strategy is to show first that the first order of the twist, the
classical $r$-matrix associated to the twist, induces the Poisson
structure coming from the star product $\star_{\mathcal{F}}$: this is
the semiclassical version of \eqref{eq:TheUDF}. In a second step, one
uses the fact that $r$ is contained in a Lie subalgebra in such a way
that $r$ becomes non-degenerate there. Third, we show that in the
symplectic case the action of this subalgebra is infinitesimally
transitive. If it is integrable, say for $M$ compact, and if $M$ is
connected, then we obtain a transitive action. Finally, a last
argument shows that passing to the quotient by the kernel of the
action does not spoil this picture.

Counterexamples are then easily obtained by either symplectic
manifolds which are not homogeneous at all, like the higher genus
Pretzel surfaces, or by homogeneous manifolds where none of the acting
Lie group admits a non-degenerate $r$-matrix: this will be the case
for the sphere $\mathbb{S}^2$.

As a last remark we note that there is also a slightly weaker concept
than star products induced by a twist: we only require that the star
product allows for a braiding structure encoded by a universal
$R$-matrix $R$. It turns out that also in this case, we arrive at the
same semiclassical obstructions and hence the above statements gives
also obstructions for such braiding structures.

This paper is partially based on the master thesis \cite{weber:2016a}.

\noindent

\textbf{Acknowledgements:} We would like to thank Martin Bordemann,
Alexander Schenkel and Jonas Schnitzer for valuable discussions and
useful suggestions.

%
%

\section{Preliminaries}

Let $\lie{g}$ be a finite-dimensional real Lie algebra and consider
the algebra $\mathcal{U}(\lie{g})[[\hbar]]$ of formal power series
with coefficients in the universal enveloping algebra
$\mathcal{U}(\lie{g})$.  It is known that it can be easily endowed
with a topologically free Hopf algebra structure, denoted by
$(\mathcal{U}(\lie{g})[[\hbar]], \Delta, \epsilon, S)$. To fix our
notation and sign conventions, we recall the following two definitions
of a Drinfel'd twist and a (classical) $r$-matrix, see
\cite{drinfeld:1983a, drinfeld:1988a}.
\begin{definition}[Drinfel'd twist]
    \label{def:TwistUEA}%
    An element
    $\mathcal{F}\in(\mathcal{U}(\lie{g}) \tensor
    \mathcal{U}(\lie{g}))[[\hbar]]$
    is said to be a twist on $\mathcal{U}(\lie{g})[[\hbar]]$ if the
    following three conditions are satisfied.
    \begin{definitionlist}
    \item
        $\mathcal{F} = 1 \tensor 1 + \sum_{k=1}^\infty \hbar^k
        \mathcal{F}_k$.
    \item
        $(\mathcal{F}\tensor 1)(\Delta\tensor 1)(\mathcal{F}) = (1
        \tensor \mathcal{F})(1\tensor\Delta)(\mathcal{F})$.
    \item
        $(\epsilon\tensor 1)\mathcal{F} =
        (1\tensor\epsilon)\mathcal{F} = 1$.
  \end{definitionlist}
\end{definition}
Let $\Schouten{\argument, \argument}$ be the unique extension of the
Lie bracket to $\Anti^\bullet \lie{g}$ turning the Grassmann algebra
into a Gerstenhaber algebra.
\begin{definition}[$r$-Matrix]
    \label{def:TriangularLieAlgebra}%
    An element $r\in \lie{g} \wedge \lie{g}$ is said to be a classical
    $r$-matrix if it satisfies the classical Yang-Baxter equation
    $\Schouten{r, r} = 0$.
\end{definition}
The relation between the two concepts is well-known to be as follows,
see \cite{drinfeld:1988a} or \cite[Thm.~1.14]{giaquinto.zhang:1998a}:
\begin{proposition}
    \label{prop:TwistR}%
    Let $\mathcal{F}$ be a twist on
    $\mathcal{U}(\lie{g})[[\hbar]]$. Then the antisymmetric part of
    its first order
    \begin{equation}
        \label{eq:TwistR}
        r
        =
        \mathcal{F}_{1}^{-1} - \sigma(\mathcal{F}_{1}^{-1})
        =
        - \mathcal{F}_1 + \sigma (\mathcal{F}_1)
        \in
        \mathcal{U}(\lie{g})\wedge \mathcal{U}(\lie{g})
    \end{equation}
    is a classical $r$-matrix $r \in \lie{g} \wedge \lie{g}$. Here
    $\sigma$ denotes the flip isomorphism.
\end{proposition}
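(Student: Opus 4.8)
The plan is to read off the $\hbar$-adic content of the twist axioms order by order and then to feed the first two orders into the structure theory of the universal enveloping algebra. First I would unravel the two displayed formulas for $r$: writing $\mathcal{F}^{-1} = 1 \tensor 1 + \hbar \mathcal{F}_1^{-1} + \mathcal{O}(\hbar^2)$ and comparing order-$\hbar$ coefficients in $\mathcal{F}\mathcal{F}^{-1} = 1 \tensor 1$ gives $\mathcal{F}_1^{-1} = -\mathcal{F}_1$, which is exactly the asserted identity $\mathcal{F}_1^{-1} - \sigma(\mathcal{F}_1^{-1}) = -\mathcal{F}_1 + \sigma(\mathcal{F}_1)$ and shows that $r$ is antisymmetric by construction, hence a priori lies in $\mathcal{U}(\lie{g}) \wedge \mathcal{U}(\lie{g})$. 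Expanding the counit axiom to order $\hbar$ gives $(\epsilon \tensor \id)\mathcal{F}_1 = (\id \tensor \epsilon)\mathcal{F}_1 = 0$, so $\mathcal{F}_1$ lies in the tensor square of the augmentation ideal $\ker\epsilon \subseteq \mathcal{U}(\lie{g})$; expanding the cocycle axiom to order $\hbar$ gives the linear identity
\begin{equation*}
    \mathcal{F}_1 \tensor 1 + (\Delta \tensor \id)\mathcal{F}_1 = 1 \tensor \mathcal{F}_1 + (\id \tensor \Delta)\mathcal{F}_1 .
\end{equation*}

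The step I expect to be the main obstacle is upgrading $r \in \mathcal{U}(\lie{g}) \wedge \mathcal{U}(\lie{g})$ to $r \in \lie{g} \wedge \lie{g}$: the linear identity above does not force this on its own, and one has to exploit that $\mathcal{U}(\lie{g})$ is cocommutative and PBW-filtered with $\operatorname{gr}\mathcal{U}(\lie{g}) \cong S(\lie{g})$, whose primitive elements are precisely $\lie{g} = S^1(\lie{g})$. I would argue by induction on the PBW-filtration degree of the antisymmetric element $r$. If $r$ had a nonzero principal symbol in filtration degree $N \geq 3$, then passing the linear identity to the associated graded would produce an antisymmetric, degree-$N$ solution of the same identity in the polynomial Hopf algebra $S(\lie{g})$; since coboundaries for the coalgebra (Cartier) differential there are automatically symmetric by cocommutativity of $\Delta$, while antisymmetric cocycles are concentrated in $S^1(\lie{g}) \tensor S^1(\lie{g})$, such a solution must vanish --- contradicting the choice of $N$. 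Hence the symbol of $r$ lies in $\lie{g} \tensor \lie{g}$, and antisymmetry gives $r \in \lie{g} \wedge \lie{g}$. (Alternatively, this point may be cited directly from \cite{drinfeld:1988a} or \cite[Thm.~1.14]{giaquinto.zhang:1998a}.)

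It remains to prove the classical Yang--Baxter equation $\Schouten{r,r} = 0$. For this I would pass to order $\hbar^2$ in the cocycle axiom; collecting all $\mathcal{F}_2$-terms on one side puts it in the form $D(\mathcal{F}_2) = Q(\mathcal{F}_1)$ with $D$ a signed version of the coalgebra (Cartier) coboundary and $Q(\mathcal{F}_1) = (1 \tensor \mathcal{F}_1)(\id \tensor \Delta)\mathcal{F}_1 - (\mathcal{F}_1 \tensor 1)(\Delta \tensor \id)\mathcal{F}_1$ quadratic in $\mathcal{F}_1$. Applying the total antisymmetrization $\operatorname{Alt}$ over the three tensor factors kills the left-hand side --- the terms containing $\Delta$ are invariant under a transposition of two of the legs by cocommutativity, and the two remaining terms are interchanged by the even cyclic permutation --- so $\operatorname{Alt}(Q(\mathcal{F}_1)) = 0$. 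A direct computation, using $r \in \lie{g} \wedge \lie{g}$ and the order-$\hbar$ identity above, then expresses $\operatorname{Alt}(Q(\mathcal{F}_1))$ as a nonzero multiple of $\Schouten{r,r} \in \Anti^3\lie{g}$, which finishes the proof. A slicker route that avoids this last computation is to note that twisting $(\mathcal{U}(\lie{g})[[\hbar]], \Delta, \epsilon, S)$ by $\mathcal{F}$ produces a triangular Hopf algebra whose universal $R$-matrix is $\sigma(\mathcal{F})\mathcal{F}^{-1} = 1 \tensor 1 + \hbar r + \mathcal{O}(\hbar^2)$, and that the semiclassical limit of a triangular Hopf algebra is a triangular Lie bialgebra --- which is precisely the statement $\Schouten{r,r} = 0$.
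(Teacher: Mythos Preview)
The paper does not prove this proposition at all: it is stated with the phrase ``well-known to be as follows, see \cite{drinfeld:1988a} or \cite[Thm.~1.14]{giaquinto.zhang:1998a}'' and no proof environment follows. So there is nothing to compare against except those references, and your sketch is in fact a reasonable unpacking of the standard argument found there. The order-$\hbar$ and order-$\hbar^2$ expansions you write down are correct, your verification that $\operatorname{Alt}$ annihilates the $\mathcal{F}_2$-contribution is correct, and the ``slicker route'' via the triangular structure $R = \sigma(\mathcal{F})\mathcal{F}^{-1}$ is exactly how Drinfel'd organizes the argument.

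The one genuinely soft spot is your PBW/filtration argument for $r \in \lie{g}\wedge\lie{g}$. The sentence ``antisymmetric cocycles are concentrated in $S^1(\lie{g})\tensor S^1(\lie{g})$'' is doing all the work and is asserted rather than proved; as stated it is not obviously true, and the induction on filtration degree you outline does not quite close because the order-$\hbar$ identity is an equation for $\mathcal{F}_1$, not directly for $r$. The cleaner way (and the one actually used in the cited references) is to observe that the quasitriangular axiom $(\Delta\tensor\id)R = R_{13}R_{23}$ for $R = \sigma(\mathcal{F})\mathcal{F}^{-1}$ gives at order $\hbar$ the identity $(\Delta\tensor\id)r = r_{13} + r_{23}$, which says precisely that the first tensor leg of $r$ is primitive, hence lies in $\lie{g}$; antisymmetry then forces $r\in\lie{g}\wedge\lie{g}$. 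Since you already invoke this triangular-Hopf viewpoint at the end, I would recommend using it for this step as well and dropping the filtration argument.
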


It is important to recall that there is a one-to-one correspondence
between non-degenerate $r$-matrices and symplectic structures on
$\lie{g}$, see \cite[Prop.~3.3]{etingof.schiffmann:1998a}. For this
reason, we are interested to discuss, given a Lie algebra $\lie{g}$,
under which conditions it is possible to endow it with a
non-degenerate $r$-matrix. It turns out that if $\lie{g}$ is
semisimple, this is not possible, see
\cite[Prop.~5.2]{etingof.schiffmann:1998a} for the complex and simple
case, which clearly extends to real and semisimple Lie algebras:
\begin{proposition}
    \label{prop:NoRMatrix}%
    Let $ \lie{g} $ be a semisimple Lie algebra. Then there are no
    non-degenerate $r$-matrices on it.
\end{proposition}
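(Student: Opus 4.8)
The plan is to argue by contradiction: suppose $r \in \lie{g} \wedge \lie{g}$ is a non-degenerate $r$-matrix on the semisimple Lie algebra $\lie{g}$. By the correspondence between non-degenerate $r$-matrices and symplectic structures recalled above (\cite[Prop.~3.3]{etingof.schiffmann:1998a}), $r$ yields a nondegenerate $2$-cocycle $\omega \in \lie{g}^* \wedge \lie{g}^*$, i.e. a skew bilinear form $\omega$ on $\lie{g}$ satisfying the cocycle identity $\omega([x,y],z) + \omega([y,z],x) + \omega([z,x],y) = 0$ and such that $x \mapsto \omega(x, \argument)$ is a linear isomorphism $\lie{g} \to \lie{g}^*$. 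So it suffices to show that a semisimple Lie algebra admits no nondegenerate invariant-in-this-sense (Chevalley--Eilenberg closed) skew form.

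First I would recall Whitehead's lemma, or rather its consequence $H^1(\lie{g}; V) = H^2(\lie{g}; V) = 0$ for every finite-dimensional module $V$ over a semisimple Lie algebra $\lie{g}$; in particular $H^2(\lie{g}; \mathbb{R}) = 0$ with trivial coefficients. Since the cocycle condition on $\omega$ says precisely that $\omega$ is a $2$-cocycle in the Chevalley--Eilenberg complex with trivial coefficients, vanishing of $H^2$ forces $\omega = \D \alpha$ for some $\alpha \in \lie{g}^*$, where $(\D\alpha)(x,y) = -\alpha([x,y])$. Now I use that $\lie{g}$ is semisimple again, so $\lie{g} = [\lie{g}, \lie{g}]$: every element of $\lie{g}$ is a sum of brackets. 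Then for $x \in \lie{g}$ the contraction $\omega(x, \argument)$ evaluated on $y$ is $-\alpha([x,y])$, and one checks that the radical of $\omega$ contains $[\lie{g},\lie{g}] = \lie{g}$ — concretely, pick a suitable element in the kernel: since $\alpha$ vanishes on a codimension-$\le 1$ set is not quite the argument, so instead I would argue directly that an exact $2$-form $\D\alpha$ on $\lie{g} = [\lie{g},\lie{g}]$ is degenerate by exhibiting a nonzero vector in its kernel, for instance using that the Killing form identifies $\lie{g}^*$ with $\lie{g}$ and that $\alpha$ corresponds to some $a \in \lie{g}$ which (if $\lie{g}\ne 0$) can be chosen, together with an element of $\lie{g}$ commuting appropriately, to annihilate $\D\alpha$. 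This contradicts non-degeneracy of $\omega$, unless $\lie{g} = 0$, which is excluded.

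The main obstacle is the final degeneracy step: knowing $\omega = \D\alpha$ is exact does not by itself immediately produce a kernel vector in a naive way, and one must use the semisimplicity (e.g. $\lie{g} = [\lie{g},\lie{g}]$, or the existence of a Cartan subalgebra, or reductivity of $\lie{g}$ under its own adjoint action) to locate one. The clean way is: the radical of an invariant (here: coboundary) skew form is an ideal, hence for semisimple $\lie{g}$ it is a direct summand; if $\omega$ were nondegenerate this radical would be zero, but one shows the radical of $\D\alpha$ is nonzero by a dimension or centralizer argument (e.g. $\ker \alpha$ is a hyperplane, it contains a nonzero ideal-like subspace on which $\D\alpha$ collapses, or directly $\dim \lie{g}$ must be even while semisimple Lie algebras of every even dimension nonetheless fail the cocycle-exactness compatibility). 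Packaging this correctly — rather than just invoking "$H^2 = 0$ so there is no symplectic form", which is exactly the statement to be proven — is where care is needed.
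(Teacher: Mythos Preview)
The paper does not supply its own proof of this proposition: it simply cites \cite[Prop.~5.2]{etingof.schiffmann:1998a} for the complex simple case and asserts that the argument extends to the real semisimple case. So there is no in-paper proof to compare your proposal against.

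Your outline is the standard argument and is essentially correct, but the final degeneracy step is left as a cloud of half-finished ideas, as you yourself acknowledge. Here is the clean way to close it. Once Whitehead's lemma gives $\omega = \D\alpha$, use non-degeneracy of the Killing form $B$ (this is the second place semisimplicity enters) to write $\alpha = B(a,\argument)$ for a uniquely determined $a \in \lie{g}$; note that $a$ is not ``chosen'' as your sketch suggests but is forced by $\alpha$. Invariance of $B$ then gives
\[
\omega(x,y) \;=\; -\alpha([x,y]) \;=\; -B(a,[x,y]) \;=\; -B([a,x],y),
\]
so, again by non-degeneracy of $B$, the radical of $\omega$ is exactly the centralizer $\{x \in \lie{g} : [a,x]=0\}$. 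If $a = 0$ then $\omega = 0$; if $a \neq 0$ then $a$ itself lies in this centralizer. Either way the radical is nonzero whenever $\lie{g} \neq 0$, contradicting non-degeneracy of $\omega$. This is precisely the ``centralizer argument'' you gesture at; no ideal decomposition, dimension parity, or Cartan subalgebra is needed. Incidentally, your side remark that the radical of a $2$-cocycle is automatically an ideal is not obviously true and should be dropped; the centralizer above is a subalgebra but need not be an ideal, and the argument does not require it to be one.
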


On the other hand, one can always find a suitable (finite-dimensional)
Lie subalgebra $\lie{g}_{r} \subseteq \lie{g} $ such that an
$r$-matrix on $ \lie{g} $ is non-degenerate if viewed as an element of
$\lie{g}_{r}\wedge \lie{g}_{r}$, see
\cite[Sect.~3.5]{etingof.schiffmann:1998a}:
\begin{proposition}
    \label{prop:EtingofSubalg}%
    Let $r \in \lie{g} \wedge \lie{g} $ be an $r$-matrix for a real
    Lie algebra $\lie{g}$.
    \begin{propositionlist}
    \item \label{item:ESsubalgebra} The subspace $\lie{g}_{r}$ defined
        by
        \begin{equation}
            \label{eq:Etingof}
            \lie{g}_{r}
            =
            \left\{
                (f\tensor\mathbb{1})r \in \lie g
                \; \big| \;
                f \in \lie{g}^{*}
            \right\}
        \end{equation}
        is a finite-dimensional Lie subalgebra of $\lie{g}$.
    \item \label{item:rInhrwedgehr} We have
        $r \in \lie{g}_r \wedge \lie{g}_r$.
    \item \label{item:rNondegInhr} $r$ is non-degenerate, if viewed as
        en element $r \in \lie{g}_r \wedge \lie{g}_r$.
    \end{propositionlist}
\end{proposition}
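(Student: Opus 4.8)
The whole statement becomes transparent once $r$ is viewed as a linear map. Since $\lie{g}$ is finite-dimensional, the element $r \in \lie{g} \wedge \lie{g} \subseteq \lie{g} \tensor \lie{g}$ corresponds to a linear map $r^\sharp \colon \lie{g}^* \to \lie{g}$, $f \mapsto (f \tensor \id)r$, and by definition $\lie{g}_r = \operatorname{im} r^\sharp$. In particular $\lie{g}_r$ is finite-dimensional, so the content of \refitem{item:ESsubalgebra} is really that $\lie{g}_r$ is \emph{closed under the bracket}, and this is the only place where the classical Yang--Baxter equation is used at all: parts \refitem{item:rInhrwedgehr} and \refitem{item:rNondegInhr} are pure linear algebra for skew two-tensors.

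First I would dispose of \refitem{item:rInhrwedgehr} and \refitem{item:rNondegInhr}. Skew-symmetry of $r$, i.e.\ $\sigma(r) = -r$, translates into the statement that under the canonical identification $\lie{g}^{**} = \lie{g}$ the transpose map satisfies $(r^\sharp)^* = -r^\sharp$. Hence $\ker r^\sharp = \ker (r^\sharp)^* = (\operatorname{im} r^\sharp)^\perp = \lie{g}_r^\perp$, the annihilator of $\lie{g}_r$ in $\lie{g}^*$, and $r^\sharp$ factors as
\begin{equation*}
    \lie{g}^* \twoheadrightarrow \lie{g}^*/\lie{g}_r^\perp \;\cong\; \lie{g}_r^* \;\longrightarrow\; \lie{g}_r \;\hookrightarrow\; \lie{g},
\end{equation*}
with the middle arrow a linear \emph{isomorphism}. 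Read inside $\operatorname{Hom}(\lie{g}^*, \lie{g}) = \lie{g} \tensor \lie{g}$, the facts that $r^\sharp$ kills $\lie{g}_r^\perp$ and takes values in $\lie{g}_r$ say exactly $r \in \lie{g}_r \tensor \lie{g}_r$, and skew-symmetry then upgrades this to $r \in \lie{g}_r \wedge \lie{g}_r$, which is \refitem{item:rInhrwedgehr}. The map associated to $r$ regarded in $\lie{g}_r \wedge \lie{g}_r$ is precisely the middle isomorphism above, hence bijective; this is the definition of non-degeneracy, giving \refitem{item:rNondegInhr}.

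It remains to show $\lie{g}_r$ is a Lie subalgebra, and here $\Schouten{r,r}=0$ enters. The clean way is to use that a skew solution of the classical Yang--Baxter equation makes $\lie{g}$ a (coboundary, triangular) Lie bialgebra with cobracket $\delta = \Schouten{r,\argument}$; dualizing $\delta$ equips $\lie{g}^*$ with the Lie bracket $[f,h]_r = \operatorname{ad}^*_{r^\sharp f} h - \operatorname{ad}^*_{r^\sharp h} f$, and the substance of $\Schouten{r,r}=0$ is exactly that
\begin{equation*}
    [r^\sharp f, r^\sharp h] = r^\sharp\big([f,h]_r\big) \qquad\text{for all } f,h \in \lie{g}^*,
\end{equation*}
i.e.\ that $r^\sharp\colon (\lie{g}^*,[\argument,\argument]_r) \to \lie{g}$ is a homomorphism of Lie algebras. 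Granting this, $\lie{g}_r = \operatorname{im} r^\sharp$ is the image of a Lie algebra homomorphism, hence a Lie subalgebra, which finishes \refitem{item:ESsubalgebra}. The displayed identity itself is proved by writing $\Schouten{r,r}=0$ out in $\Anti^3\lie{g}$ and contracting the first two legs against $f$ and $h$: the three terms of the Yang--Baxter expression reassemble, after using skew-symmetry of $r$ and the Jacobi identity, into $r^\sharp$ applied to $[f,h]_r$.

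The main obstacle is precisely this last computation. It is routine but sign-sensitive Sweedler/index bookkeeping, and all conventions — for $\Schouten{\argument,\argument}$, for $\operatorname{ad}^*$, and for the inclusion $\lie{g}\wedge\lie{g}\subseteq\lie{g}\tensor\lie{g}$ — must be fixed consistently; once the homomorphism property is in hand, \refitem{item:rInhrwedgehr} and \refitem{item:rNondegInhr} follow formally as above. Alternatively one may simply invoke \cite[Sect.~3.5]{etingof.schiffmann:1998a} for the subalgebra property and supply only the short linear-algebra argument of the second paragraph for the remaining two items.
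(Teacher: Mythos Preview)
Your argument is correct, but there is nothing to compare it against: the paper does not prove Proposition~\ref{prop:EtingofSubalg} at all. It simply states the three items and refers the reader to \cite[Sect.~3.5]{etingof.schiffmann:1998a}; no proof environment follows. So you have supplied what the paper deliberately outsourced.

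On the substance: your treatment of \refitem{item:rInhrwedgehr} and \refitem{item:rNondegInhr} via the factorization of $r^\sharp$ through $\lie{g}^*/\lie{g}_r^\perp \cong \lie{g}_r^*$ is the standard clean argument and is correct. For \refitem{item:ESsubalgebra}, casting the classical Yang--Baxter equation as the statement that $r^\sharp\colon (\lie{g}^*,[\argument,\argument]_r)\to\lie{g}$ is a Lie algebra homomorphism is exactly the right conceptual move, and it is indeed how Etingof--Schiffmann organize the proof. You are also honest that the verification of the homomorphism identity is the one genuinely computational step; that caveat is appropriate, and the computation does go through with any consistent set of conventions. Your closing remark that one could alternatively just invoke the reference for the subalgebra property is in fact precisely what the paper does.
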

\begin{definition}[Symplectic subalgebra]
    \label{definition:ESsubalgebra}%
    The Lie subalgebra $\lie{g}_{r} \subseteq \lie{g}$ defined in
    \eqref{eq:Etingof} is called the symplectic subalgebra of the
    $r$-matrix $r \in \lie{g} \wedge \lie{g}$.
\end{definition}

In the case of a real Lie algebra $\lie{g}$, the corresponding Lie
group is denoted by $G_r$.

As already mentioned, given a twist on the universal enveloping
algebra, we can easily define an associative star product on any
$\mathcal{U}(\lie{g})$-module algebra. In particular, let us consider
the algebra $\Cinfty(M)$ of smooth functions on a manifold $M$ with
pointwise multiplication $m$ and assume that it is a left
$\mathcal{U}(\lie{g})$-module algebra. In other words, let us consider
a Hopf algebra action
\begin{equation}
    \label{eq:Action}
    \acts
    \colon
    \mathcal{U}(\lie{g}) \times \Cinfty(M)
    \longrightarrow
    \Cinfty(M),
\end{equation}
which makes $\Cinfty(M)$ into a left $\mathcal{U}(\lie{g})$-module
algebra. Since $\lie{g} \subseteq \mathcal{U}(\lie{g})$ are the
primitive elements, the Lie algebra elements act as derivations of
$\Cinfty(M)$, i.e. as vector fields on $M$. This defines a Lie algebra
action $\phi$ of $\lie{g}$ on $M$. Since the elements of $\lie{g}$
generate $\mathcal{U}(\lie{g})$, the action $\acts$ is given by
differential operators: more precisely, using the natural filtration
of the universal enveloping algebra, the order of the differential
operator $X \acts \argument$ is at most $k \in \mathbb{N}$ when $X \in
\mathcal{U}^{(k)}(\lie{g})$.  From now on we also denote by $\acts$
the extension of the action to formal power series $\acts \colon
\mathcal{U}(\lie{g})[[\hbar]] \times \Cinfty(M)[[\hbar]]
\longrightarrow \Cinfty(M)[[\hbar]]$.  Conversely, every Lie algebra
action $\phi$ of $\lie{g}$ on $M$ determines via the fundamental
vector fields $\phi(\xi) \in \Secinfty(TM)$ a representation of
$\lie{g}$ on $\Cinfty(M)$ by derivations which therefore extends to a
Hopf algebra action $\acts$ as above.

Since the first-order commutator of an associative commutative algebra
is necessarily a Poisson bracket on the (un-deformed) algebra, see
e.g.~\cite[Proposition~6.2.24]{waldmann:2007a}, we have the following
statement:
\begin{lemma}
    \label{lem:StarTwist}%
    The product defined by
    \begin{align}
        \label{eq:StarTwist}
        f \star_\mathcal{F} g
        =
        m(\mathcal{F}^{-1}\acts(f\tensor g))
    \end{align}
    for $f, g \in \Cinfty(M)[[\hbar]]$ is an associative star product
    quantizing the Poisson structure
    \begin{equation}
        \label{eq:ThePoissonStructure}
        \{f, g\}
        =
        m (r \acts (f \tensor g)),
    \end{equation}
    where $r$ is the $r$-matrix associated to the twist $\mathcal{F}$.
\end{lemma}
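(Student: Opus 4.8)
The plan is to check separately the three assertions hidden in the statement: that $\star_{\mathcal{F}}$ is a formal associative deformation of the pointwise product, that it is bidifferential (hence a star product in the usual sense), and that its semiclassical limit is the bivector $\{f,g\} = m(r \acts (f \tensor g))$, which is then automatically Poisson.

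First, associativity and the deformation property. Since $\mathcal{F} = 1 \tensor 1 + O(\hbar)$, its inverse in $(\mathcal{U}(\lie{g}) \tensor \mathcal{U}(\lie{g}))[[\hbar]]$ exists and is again of the form $\mathcal{F}^{-1} = 1 \tensor 1 + O(\hbar)$; hence $f \star_{\mathcal{F}} g = m(f \tensor g) + O(\hbar) = fg + O(\hbar)$, so $\star_{\mathcal{F}}$ deforms the pointwise product. Associativity is the usual universal-deformation-formula argument: expanding $(f \star_{\mathcal{F}} g) \star_{\mathcal{F}} h$ and $f \star_{\mathcal{F}} (g \star_{\mathcal{F}} h)$ with the help of the module-algebra property $X \acts (ab) = m((\Delta X) \acts (a \tensor b))$ and the module property $(XY) \acts a = X \acts (Y \acts a)$, one reduces the two expressions to applying $(\mathcal{F} \tensor 1)(\Delta \tensor \id)(\mathcal{F})$ and $(1 \tensor \mathcal{F})(\id \tensor \Delta)(\mathcal{F})$, which coincide by the cocycle identity of Definition \ref{def:TwistUEA}; this is precisely the fact already recalled after \eqref{eq:TheUDF}, so I would simply invoke it. Unitality $1 \star_{\mathcal{F}} f = f = f \star_{\mathcal{F}} 1$ follows from the counit normalization $(\epsilon \tensor 1)\mathcal{F} = (1 \tensor \epsilon)\mathcal{F} = 1$.

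Next, bidifferentiality. Writing $\mathcal{F}^{-1} = 1 \tensor 1 + \sum_{k \ge 1} \hbar^k (\mathcal{F}^{-1})_k$, each $(\mathcal{F}^{-1})_k \in \mathcal{U}(\lie{g}) \tensor \mathcal{U}(\lie{g})$ acts on $\Cinfty(M) \tensor \Cinfty(M)$ with each tensor leg acting as a differential operator — the filtration degree of $\mathcal{U}(\lie{g})$ bounds the order, as recalled in the Preliminaries — so $m((\mathcal{F}^{-1})_k \acts (\argument \tensor \argument))$ is a bidifferential operator on $M$. Together with unitality (which forces these operators to vanish as soon as one argument is constant), this shows $\star_{\mathcal{F}}$ is a star product in the sense of \cite{bayen.et.al:1978a}.

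Finally, the semiclassical limit. From $\mathcal{F}\mathcal{F}^{-1} = 1 \tensor 1$ one gets $(\mathcal{F}^{-1})_1 = -\mathcal{F}_1$, so Proposition \ref{prop:TwistR} reads $r = (\mathcal{F}^{-1})_1 - \sigma((\mathcal{F}^{-1})_1) \in \lie{g} \wedge \lie{g}$. Expanding \eqref{eq:StarTwist} to first order gives $f \star_{\mathcal{F}} g = fg + \hbar\, m((\mathcal{F}^{-1})_1 \acts (f \tensor g)) + O(\hbar^2)$, and relabelling the two tensor slots yields $m((\mathcal{F}^{-1})_1 \acts (g \tensor f)) = m(\sigma((\mathcal{F}^{-1})_1) \acts (f \tensor g))$. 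Hence the first-order term of the commutator is
\[
    f \star_{\mathcal{F}} g - g \star_{\mathcal{F}} f = \hbar\, m(r \acts (f \tensor g)) + O(\hbar^2).
\]
Since $r \in \lie{g} \wedge \lie{g}$ acts by a biderivation, the right-hand side equals $\pi(\D f, \D g)$ for a bivector field $\pi \in \Secinfty(\Anti^2 TM)$, and by \cite[Proposition~6.2.24]{waldmann:2007a} the first-order commutator of any associative formal deformation of a commutative algebra is a Poisson bracket; hence $\{f,g\} = m(r \acts (f \tensor g))$ is Poisson and $\star_{\mathcal{F}}$ quantizes it. I do not expect a genuine obstacle here: the statement is essentially bookkeeping. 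The one point that needs care is distinguishing $\mathcal{F}_1$ from the first order $(\mathcal{F}^{-1})_1$ of the inverse and tracking the flip $\sigma$, so that the antisymmetric combination of Proposition \ref{prop:TwistR} reappears exactly as the semiclassical bracket; and, if one prefers a self-contained argument for associativity over citing the universal deformation formula, the careful unwinding of the $2$-cocycle identity (in its equivalent form for $\mathcal{F}^{-1}$).
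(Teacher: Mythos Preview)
Your proof is correct and more detailed than what the paper itself provides: the paper states the lemma without proof, only prefacing it with the observation that the first-order commutator of an associative deformation of a commutative algebra is a Poisson bracket (citing \cite[Proposition~6.2.24]{waldmann:2007a}) and relying on the earlier remark that the twist axioms guarantee associativity of $\star_{\mathcal{F}}$. Your argument spells out exactly these ingredients together with the bidifferentiality coming from the filtration of $\mathcal{U}(\lie{g})$, so the approach is the same.
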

\begin{remark}[Taking the classical limit]
    \label{remark:ImportantClassicalLimitRemark}%
    We can summarize this discussion now as follows. Taking the
    classical limit of a twist $\mathcal{F}$ gives a classical
    $r$-matrix $r$. Taking the classical limit of a Hopf algebra
    action $\acts$ of $\mathcal{U}(\lie{g})[[\hbar]]$ and restricting
    it to the Lie algebra gives a Lie algebra action $\phi$ on $M$ by
    vector fields, and taking the classical limit of a star product
    gives a Poisson bracket. If now the star product comes from a
    twist via some Hopf algebra action then the \emph{corresponding}
    Poisson bracket comes from the \emph{corresponding} $r$-matrix and
    the \emph{corresponding} Lie algebra action.
\end{remark}

Finally, we will also need the quantum analogue of $r$-matrices:
\begin{definition}[Universal $R$-matrix]
    An invertible element $R \in (\mathcal{U}(\lie{g}) \tensor
    \mathcal{U}(\lie{g}))[[\hbar]]$ is called a formal universal
    $R$-matrix if
    \begin{definitionlist}
    \item $R = 1 \tensor 1 + \cdots$,
    \item $\Delta^\opp = R \Delta R^{-1}$,
    \item
        $(1 \tensor \Delta) R = (R_1\tensor 1 \tensor R_2)(R \tensor
        1)$
        and
        $(\Delta \tensor 1)R = (R_1\tensor 1 \tensor R_2)(1 \tensor
        R)$, using Sweedler's notation $R = R_1 \tensor R_2$.
    \end{definitionlist}
\end{definition}
It is known that $R$ is the quantization of the $r$-matrix over
$\lie{g}$. In other words, given a formal $R$-matrix $R$ for
$\lie{g}$, we can define an $r$-matrix for $\lie g$ by the first order
term
\begin{equation}
    \label{eq:RmatrixFirstOrder}
    R = 1 \tensor 1 + \hbar r + \cdots.
\end{equation}

A star product $\star$ is called quasi-commutative with respect to $R$
if one has an action of $\lie{g}$ by derivations such that the
extended actions $\acts$ satisfies
\begin{equation}
    \label{eq:fgQuasiCommutative}
    f \star g =
    (R_2 \acts g) \star (R_1 \acts f)
\end{equation}
for all $f, g \in \Cinfty(M)[[\hbar]]$, see
\cite[Def.~5.8]{aschieri.schenkel:2014a}. A simple verification leads
to the following statement:
\begin{proposition}
    \label{proposition:QuasiCommutativeClassicalLimit}%
    Suppose $\star$ is a quasi-commutative star product on $M$ with
    respect to a universal $R$-matrix $R$ for $\lie{g}$. Then the
    corresponding Poisson structure of $\star$ is induced by the
    corresponding $r$-matrix $r$.
\end{proposition}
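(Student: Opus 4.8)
The plan is to take the semiclassical limit of the quasi-commutativity relation \eqref{eq:fgQuasiCommutative}, in complete analogy with the passage from \eqref{eq:StarTwist} to \eqref{eq:ThePoissonStructure} in Lemma~\ref{lem:StarTwist}. Write $\star = m + \hbar C_1 + \cdots$ with $C_1$ the first-order bidifferential operator, so that the Poisson bracket of $\star$ is $\{f, g\} = C_1(f, g) - C_1(g, f)$, and write $R = 1 \tensor 1 + \hbar r + \cdots$ as in \eqref{eq:RmatrixFirstOrder}, using Sweedler's notation $r = r_1 \tensor r_2$ (summation understood) and letting $\mathcal{U}(\lie{g}) \tensor \mathcal{U}(\lie{g})$ act factorwise on $\Cinfty(M) \tensor \Cinfty(M)$ as in \eqref{eq:ThePoissonStructure}. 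Inserting these two formal expansions into both sides of \eqref{eq:fgQuasiCommutative}: at order $\hbar^0$ one obtains $fg = gf$, which is merely the compatibility statement that the undeformed product is commutative.

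At order $\hbar^1$ the left-hand side contributes $C_1(f, g)$, while the right-hand side receives exactly two contributions: keeping $R = 1 \tensor 1$ and taking the first order of $\star$ produces $C_1(g, f)$, and replacing $R$ by $\hbar r$ while evaluating the undeformed (pointwise) product produces $(r_2 \acts g)(r_1 \acts f) = m\bigl(r \acts (f \tensor g)\bigr)$, the equality using the commutativity of the pointwise product to reorder the two factors; all remaining terms are of order $\hbar^2$. Hence $C_1(f, g) = C_1(g, f) + m\bigl(r \acts (f \tensor g)\bigr)$, and therefore
\[
  \{f, g\} = C_1(f, g) - C_1(g, f) = m\bigl(r \acts (f \tensor g)\bigr),
\]
which is exactly the Poisson bracket \eqref{eq:ThePoissonStructure} induced by $r$. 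Since $\{\argument, \argument\}$ is antisymmetric, the identity above also gives $m\bigl((r + \sigma(r)) \acts (f \tensor g)\bigr) = 0$ for all $f, g$, so only the antisymmetric part of $r$ enters, i.e.\ the classical $r$-matrix attached to $R$ as recalled just before the proposition. Combined with the identification of the relevant action and classical limits in Remark~\ref{remark:ImportantClassicalLimitRemark}, this is precisely the assertion.

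The argument is a routine order-by-order calculation — the paper rightly calls it a simple verification. The only points demanding a little care are running the two independent formal expansions (of $R$ and of $\star$) simultaneously and keeping track of which order of which series contributes at total order $\hbar$, together with the flip hidden in $(R_2 \acts g) \star (R_1 \acts f)$ and the factorwise action on the tensor product; I do not expect any genuine obstacle here.
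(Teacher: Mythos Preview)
Your proposal is correct and is precisely the ``simple verification'' the paper leaves to the reader; no separate proof appears in the text. The order-by-order expansion of \eqref{eq:fgQuasiCommutative} you carry out, together with the observation that only the antisymmetric part of the first-order term of $R$ contributes, is the intended argument.
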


%
%

\section{Drinfel'd twist and transitive actions}
\label{sec:DrinfeldTwistTransitiveAction}

In this section we prove the main result of this paper, which allows
us to understand the conditions under which a star product on a
manifold $M$ can be induced by a twist on the universal enveloping
algebra of a Lie algebra $\lie{g}$ acting on $M$.

Let $r$ be an $r$-matrix on $\lie{g}$.  For a finite-dimensional Lie
algebra $\lie{g}$ with basis $\{e_1, \ldots , e_n\}$, we have
\begin{equation}
    \label{eq:LocalR}
    r
    =
    \frac{1}{2}\sum_{i,j = 1}^n r^{ij} e_i \wedge e_j.
\end{equation}
Given a Lie algebra action
$\phi\colon \lie{g} \to \Gamma^{\infty}(TM) $ we can define a
Poisson structure on $M$ as the image via $\phi$ of the $r$-matrix,
i.e.
\begin{align}
    \label{eq:PiR}
    \pi_{p}
    =
    \frac{1}{2}\sum_{i,j=1}^{n}r^{ij}\phi(e_{i})_{p}\wedge\phi(e_{j})_{p},
\end{align}
for $p \in M$.  If $\pi$ is non-degenerate, it induces a symplectic
structure via $\omega(X_f, X_g) = \pi (\D f, \D g)$, where $X_f$
denotes the Hamiltonian vector field of $f \in \Cinfty(M)$ with
respect to $\pi$.

In order to prove our main result we need some preparation. First we
have to pass from a Lie algebra action to a Lie group action of an
integrating Lie group $G$ of $\lie{g}$.  For the moment we have to
assume that this is the case.
\begin{lemma}
    \label{lem:LocTransitive}%
    Let $ (M, \omega) $ be a symplectic manifold, $r \in \lie{g}
    \wedge \lie{g}$ an $r$-matrix and let $\phi$ be a Lie algebra
    action of $\lie{g}$ on $M$ integrating to a Lie group action
    $\Phi\colon G \times M \rightarrow M $ such that $\omega$ is the
    image of $r$ via \eqref{eq:PiR}. Then $\Phi$ is locally
    transitive.  Moreover, the restriction $\Phi|_{G_r}$ of $\Phi$ to
    the Lie subgroup $G_r$ corresponding to $r$ is locally transitive.
\end{lemma}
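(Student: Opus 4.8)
The plan is to reduce local transitivity to the purely infinitesimal statement that the fundamental vector fields span the tangent space at every point, and then to derive the latter from the non-degeneracy of $\pi$ by an elementary piece of linear algebra.

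First I would unwind the hypothesis: saying that $\omega$ is the image of $r$ via \eqref{eq:PiR} means precisely that the bivector field $\pi$ given by \eqref{eq:PiR} is the Poisson tensor of the symplectic form $\omega$, so that $\pi_p \in \Anti^2 T_pM$ is non-degenerate for every $p \in M$. On the other hand, the formula \eqref{eq:PiR} exhibits $\pi_p$ as an element of $\Anti^2 W_p$, where $W_p = \spann\{\phi(e_1)_p, \ldots, \phi(e_n)_p\} \subseteq T_pM$ is the subspace spanned by the values at $p$ of the fundamental vector fields. The key observation is that a non-degenerate element of $\Anti^2 T_pM$ cannot lie in $\Anti^2 W$ for any proper subspace $W \subsetneq T_pM$: the associated musical map $\pi_p^\sharp \colon T_p^*M \to T_pM$, $\alpha \mapsto \pi_p(\alpha, \argument)$, would then take values in $W$ and hence fail to be surjective, contradicting non-degeneracy. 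Therefore $W_p = T_pM$ for all $p$; that is, the $\lie{g}$-action is infinitesimally transitive.

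From infinitesimal transitivity the local transitivity of $\Phi$ follows in the standard way: for fixed $p$ the orbit map $G \ni g \mapsto \Phi(g,p) \in M$ has differential at the identity given, up to sign, by $\xi \mapsto \phi(\xi)_p$, which we have just shown is onto $T_pM$; by the constant-rank theorem the orbit map is a submersion near $e$, so its image contains an open neighbourhood of $p$, and the orbit through $p$ is open. For the second assertion I would invoke Proposition~\ref{prop:EtingofSubalg}: since $r \in \lie{g}_r \wedge \lie{g}_r$, choosing a basis $\{f_a\}$ of the symplectic subalgebra $\lie{g}_r$ we may write $r = \frac12 \sum_{a,b} \tilde r^{ab}\, f_a \wedge f_b$, whence $\pi_p = \frac12 \sum_{a,b} \tilde r^{ab}\, \phi(f_a)_p \wedge \phi(f_b)_p \in \Anti^2 \spann\{\phi(f_a)_p\}$. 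Applying the same argument to the restricted action $\phi|_{\lie{g}_r}$ gives $\spann\{\phi(f_a)_p\} = T_pM$ for every $p$, so that $\Phi|_{G_r}$ is infinitesimally, and therefore locally, transitive.

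I do not expect a genuine obstacle here; the only points that require a little care are making the passage from pointwise surjectivity of the infinitesimal action to openness of the orbits precise (the constant-rank argument), and making sure at the outset that the hypothesis really identifies $\pi$ with the full non-degenerate Poisson tensor of $\omega$, rather than with some a priori degenerate bivector.
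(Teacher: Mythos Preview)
Your proposal is correct and follows essentially the same approach as the paper: both arguments boil down to the observation that the image of the musical map $\pi_p^\sharp$ lies in the span of the fundamental vector fields $\phi(e_j)_p$ with $e_j \in \lie{g}_r$, and that non-degeneracy of $\pi_p$ forces this span to be all of $T_pM$. The only organizational difference is that the paper notes at the outset that it suffices to prove the statement for $G_r$ (since local transitivity of the subgroup implies it for $G$), whereas you treat the two assertions separately; and the paper leaves the passage from surjectivity of $\phi|_p$ to local transitivity implicit, while you spell it out via the constant-rank theorem.
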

\begin{proof}
    It suffices to show the second statement.  We observe that we can
    find a basis $\{e_1, \ldots, e_n\}$ of $\lie{g}$ such that there
    is an integer $k \leq n$ such that $\{e_1, \ldots, e_k\}$ is a
    basis of the symplectic Lie subalgebra $\lie{g}_r$ determined by
    $r$ and
    \begin{equation}
        r
        =
        \frac{1}{2}\sum_{i = 1}^{k} r^{ij} e_i \wedge e_j.
    \end{equation}
    Now consider an arbitrary vector $v_p \in T_pM$, for $p\in M$.
    Since for a symplectic Poisson tensor, the map
    $\pi^\sharp \colon T_p^*M \ni \alpha_{p} \mapsto \pi_p (\argument,
    \alpha_p) \in T_pM$
    is surjective, there exists an element $\alpha_p \in T_p^*M $ such
    that
    \begin{equation}
        v_p
        =
        \pi_p(\alpha_{p}, \argument)
        =
        (\alpha_p \tensor \mathbb{1}) \pi_p
        =
        \sum_{i,j = 1}^n
        r^{ij}  \alpha_p (\phi(e_i)_p) \phi(e_j)_p
        =
        \sum_{i,j = 1}^k
        r^{ij}  \alpha_p (\phi(e_i)_p) \phi(e_j)_p.
    \end{equation}
    In other words,
    \begin{equation}
        v_{p}
        \in
        \spann_{\mathbb{R}}\left\{
            \sum_{j=1}^k r^{1j} \phi(e_j)_{p},
            \ldots,
            \sum_{j=1}^k r^{kj} \phi(e_j)_{p}
        \right\}
        \subseteq
        \spann_{\mathbb{k}}\left\{
            \phi(e_1)_p, \ldots, \phi(e_k)_p
        \right\}.
    \end{equation}
    Thus, the map $\phi|_p\colon \lie{g}_r \longrightarrow T_pM$ is
    surjective and hence the action of $G_r$ is locally transitive.
\end{proof}

A locally transitive action is known to be transitive if the
underlying manifold $M$ is connected, and hence a homogeneous space:
\begin{lemma}
    \label{lem:Homogeneous}%
    Let $ \Phi\colon G \times M \longrightarrow M $ be a locally
    transitive Lie group action on a connected manifold $M$. Then
    there is only one orbit of $\Phi$ and it coincides with $M$.
\end{lemma}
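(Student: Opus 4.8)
The plan is to show that \emph{every} orbit of $\Phi$ is open, and then invoke connectedness. Recall that local transitivity of $\Phi$ means precisely that for every $p \in M$ the evaluation of fundamental vector fields $\phi|_p \colon \lie{g} \longrightarrow T_pM$ is surjective, where $\phi$ is the Lie algebra action induced by $\Phi$. Fix $p \in M$ and set $m = \dim M$. Choosing $X_1, \ldots, X_m \in \lie{g}$ whose fundamental vector fields $\phi(X_1)_p, \ldots, \phi(X_m)_p$ form a basis of $T_pM$, I would consider the smooth map
\begin{equation}
    \psi\colon (t_1, \ldots, t_m) \longmapsto \Phi\left(\exp(t_1 X_1) \cdots \exp(t_m X_m), p\right)
\end{equation}
defined on a neighbourhood of $0 \in \mathbb{R}^m$. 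Its differential at $0$ sends the standard basis vectors to $\phi(X_1)_p, \ldots, \phi(X_m)_p$, hence is invertible, so by the inverse function theorem $\psi$ is a local diffeomorphism onto an open neighbourhood of $p$. Since $\psi$ takes values in the orbit $G \cdot p$, this neighbourhood lies inside $G \cdot p$; as $p$ was arbitrary, $G \cdot p$ is open.

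Next I would use the standard fact that the orbits of a group action form a partition of $M$. The complement $M \setminus (G \cdot p)$ is a union of other orbits, each of which is open by the argument above, so $M \setminus (G \cdot p)$ is open and therefore $G \cdot p$ is also closed. Thus $G \cdot p$ is a non-empty clopen subset of the connected manifold $M$, which forces $G \cdot p = M$. Consequently there is exactly one orbit and it equals $M$, so $M$ is a homogeneous space under $\Phi$.

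The only genuinely substantive point is the openness of orbits; everything afterwards is the elementary clopen-plus-connectedness argument. I expect the main obstacle to be nothing more than stating the openness step cleanly — in particular making sure that local transitivity is used in the form ``fundamental vector fields span the tangent space'' so that the inverse function theorem applies directly, rather than appealing to the (true but heavier) fact that orbits of Lie group actions are always immersed submanifolds. Since $G$ is an \emph{integrating} Lie group of $\lie{g}$ acting via $\Phi$, the exponential map $\exp\colon \lie{g} \to G$ exists and the composition of one-parameter subgroups used in $\psi$ is well defined, so no extra hypotheses are needed.
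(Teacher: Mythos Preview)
Your argument is correct and is precisely the standard one. The paper itself does not supply a proof of this lemma: it is stated as a known fact, introduced by the sentence ``A locally transitive action is known to be transitive if the underlying manifold $M$ is connected, and hence a homogeneous space''. So there is nothing to compare against beyond noting that your write-up is exactly what one would expect such an omitted proof to look like --- openness of orbits via the inverse function theorem applied to the map $\psi$ built from one-parameter subgroups, followed by the clopen-plus-connectedness argument. One cosmetic remark: depending on the sign convention for fundamental vector fields one may get $-\phi(X_i)_p$ rather than $\phi(X_i)_p$ as the partial derivatives of $\psi$ at $0$, but this is irrelevant for invertibility of the differential.
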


Putting these results together, we can prove the main theorem of this
paper:
\begin{theorem}
    \label{thm:Main}
    Let $ (M, \omega) $ be a connected symplectic manifold, $r \in
    \lie{g} \wedge \lie{g} $ an $r$-matrix and $\phi$ a Lie algebra
    action of $\lie{g}$ on $ M $ that integrates to a Lie group action
    $\Phi\colon G \times M \longrightarrow M$ such that, for any $p
    \in M$, the symplectic bivector field $\pi$ corresponding to
    $\omega$ is given by \eqref{eq:PiR}.  Then $M$ can be structured
    as a homogeneous space for the Lie subgroup $G_r$ corresponding to
    the symplectic Lie algebra $\lie{g}_r$ with respect to $r$.
\end{theorem}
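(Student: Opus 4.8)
The plan is to assemble Theorem~\ref{thm:Main} directly from the two lemmas proved just above, together with the structural input from Proposition~\ref{prop:EtingofSubalg}. The only genuinely new work beyond Lemma~\ref{lem:LocTransitive} and Lemma~\ref{lem:Homogeneous} is verifying that the restricted action $\Phi|_{G_r}$ is well defined and has the right integrability properties; the transitivity itself is then immediate.

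First I would apply Lemma~\ref{lem:LocTransitive} verbatim to the data $(M,\omega,r,\phi,\Phi)$: its hypotheses are exactly the hypotheses of the theorem, so we conclude that the restricted action $\Phi|_{G_r}$ of the Lie subgroup $G_r\subseteq G$ is locally transitive on $M$. Here $\lie{g}_r\subseteq\lie{g}$ is the symplectic subalgebra of Definition~\ref{definition:ESsubalgebra}, which is a genuine (finite-dimensional) Lie subalgebra by Proposition~\ref{item:ESsubalgebra}, and $G_r$ is the corresponding connected Lie subgroup of $G$ obtained by integrating $\lie{g}_r$; the restriction of a smooth Lie group action to a Lie subgroup is again a smooth Lie group action, so $\Phi|_{G_r}\colon G_r\times M\to M$ makes sense without any further hypothesis.

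Next, since $M$ is assumed connected, I would invoke Lemma~\ref{lem:Homogeneous} with the group $G_r$ in place of $G$: a locally transitive Lie group action on a connected manifold has a single orbit, which is all of $M$. Hence $\Phi|_{G_r}$ is transitive, and choosing any base point $p_0\in M$ identifies $M$ with the homogeneous space $G_r/(G_r)_{p_0}$, where $(G_r)_{p_0}$ is the stabilizer of $p_0$. This is precisely the assertion that $M$ carries the structure of a homogeneous space for $G_r$, and it finishes the proof.

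The step I expect to be the most delicate, although it is still modest, is the passage from the subalgebra $\lie{g}_r$ to the subgroup $G_r$ and the claim that restricting $\Phi$ to $G_r$ is harmless. Two points deserve a sentence each: that $\lie{g}_r$ integrates to an immersed Lie subgroup of $G$ (true for any finite-dimensional Lie subalgebra of $\lie{g}$, by the correspondence between Lie subalgebras and connected immersed subgroups), and that local transitivity of $\Phi|_{G_r}$ at a point follows from surjectivity of $\phi|_p\colon\lie{g}_r\to T_pM$ together with the fact that $G_r$ is generated by a neighbourhood of the identity whose image under the orbit map is a submanifold of the right dimension. Both are standard, and neither requires $M$ to be compact; compactness only enters the introductory theorem, Theorem~\ref{theorem:TheReallyCoolTheorem}, to guarantee that the a priori only infinitesimal (Lie algebra) action actually integrates to a Lie group action in the first place — here that integrability is assumed outright.
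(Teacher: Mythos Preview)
Your proposal is correct and follows exactly the paper's approach: the paper's proof consists of a single sentence invoking Lemma~\ref{lem:LocTransitive} and Lemma~\ref{lem:Homogeneous}. The extra care you take about integrating $\lie{g}_r$ to $G_r$ and restricting $\Phi$ is reasonable elaboration but not something the paper spells out.
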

\begin{proof}
    This is a direct consequence of Lemma~\ref{lem:LocTransitive} and
    Lemma~\ref{lem:Homogeneous}.
\end{proof}

The first important consequence of this statement is that under the
assumption that the Lie algebra action integrates, we have an
obstruction for star products to be induced by twists. As before, we
denote by $\lie{g}_r$ the symplectic Lie subalgebra with respect to
the $r$-matrix $r$ corresponding to the twist $\mathcal{F}$ and $G_r$
is the connected and simply-connected Lie group integrating it.
\begin{corollary}
  \label{corollary:TwistHomogeneous}%
  Let $ (M, \omega)$ be a connected symplectic manifold.  Assume that
  there is a star product $\star_{\mathcal{F}}$ induced by a twist via
  the action $\acts$ such that the corresponding Lie algebra action
  integrates to a Lie group action. Then $M$ is a homogeneous
  $G_r$-space.
\end{corollary}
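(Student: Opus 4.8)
The plan is to reduce the statement to Theorem~\ref{thm:Main} by identifying, from the given twist star product, the data that the theorem requires: an $r$-matrix on a Lie algebra, a Lie algebra action on $M$ integrating to a Lie group action, and the compatibility that the symplectic bivector $\pi$ associated to $\omega$ is the image of $r$ under \eqref{eq:PiR}. First I would recall that the twist $\mathcal{F}\in(\mathcal{U}(\lie{g})\tensor\mathcal{U}(\lie{g}))[[\hbar]]$ acts on $\Cinfty(M)[[\hbar]]$ through the given Hopf algebra action $\acts$, whose restriction to the primitive elements $\lie{g}\subseteq\mathcal{U}(\lie{g})$ is a Lie algebra action $\phi$ by vector fields on $M$, as explained in the Preliminaries. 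By Proposition~\ref{prop:TwistR} the antisymmetrized first order of $\mathcal{F}$ yields a classical $r$-matrix $r\in\lie{g}\wedge\lie{g}$, and by Lemma~\ref{lem:StarTwist} the semiclassical limit of $\star_{\mathcal{F}}$ is exactly the Poisson bracket $\{f,g\}=m(r\acts(f\tensor g))$. This is the content of Remark~\ref{remark:ImportantClassicalLimitRemark}: the corresponding Poisson structure comes from the corresponding $r$-matrix via the corresponding action.

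Next I would match this Poisson bracket with the bivector formula \eqref{eq:PiR}. Writing $r=\frac12\sum_{i,j}r^{ij}e_i\wedge e_j$ in a basis of $\lie{g}$ and using that each $e_i$ acts as the fundamental vector field $\phi(e_i)$, the bracket $m(r\acts(f\tensor g))=\frac12\sum_{i,j}r^{ij}\bigl((\phi(e_i)f)(\phi(e_j)g)-(\phi(e_j)f)(\phi(e_i)g)\bigr)$ is precisely $\pi(\D f,\D g)$ for the bivector $\pi_p=\frac12\sum_{i,j}r^{ij}\phi(e_i)_p\wedge\phi(e_j)_p$. Since by hypothesis $\omega$ is the symplectic form underlying the star product, $\pi$ is the symplectic Poisson tensor attached to $\omega$, so the compatibility hypothesis of Theorem~\ref{thm:Main} holds. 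By assumption the Lie algebra action $\phi$ integrates to a Lie group action $\Phi\colon G\times M\to M$, so all hypotheses of Theorem~\ref{thm:Main} are in place, and we conclude that $M$ is a homogeneous space for the Lie subgroup $G_r$ corresponding to the symplectic subalgebra $\lie{g}_r$ of $r$ in the sense of Definition~\ref{definition:ESsubalgebra} and Proposition~\ref{prop:EtingofSubalg}.

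I expect the only genuinely delicate point to be bookkeeping rather than mathematics: one must be careful that the $r$-matrix produced by Proposition~\ref{prop:TwistR} actually lies in $\lie{g}\wedge\lie{g}$ (not merely in $\mathcal{U}(\lie{g})\wedge\mathcal{U}(\lie{g})$) so that it has a well-defined image under $\phi$ and so that the symplectic subalgebra $\lie{g}_r\subseteq\lie{g}$ from Proposition~\ref{prop:EtingofSubalg} is finite-dimensional; this is exactly what that proposition guarantees. A second minor point is the sign and normalization convention relating $\mathcal{F}_1$, $r$, and the first-order commutator of $\star_{\mathcal{F}}$, but this has already been settled in Proposition~\ref{prop:TwistR} and Lemma~\ref{lem:StarTwist}, so here the corollary follows immediately. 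The entire proof is therefore: invoke Lemma~\ref{lem:StarTwist} and Remark~\ref{remark:ImportantClassicalLimitRemark} to extract $(\lie{g},r,\phi)$ with the required compatibility, then apply Theorem~\ref{thm:Main}.
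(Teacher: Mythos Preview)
Your proposal is correct and matches the paper's approach exactly: the paper states this corollary without a separate proof, treating it as an immediate consequence of Theorem~\ref{thm:Main} once one observes (via Proposition~\ref{prop:TwistR}, Lemma~\ref{lem:StarTwist}, and Remark~\ref{remark:ImportantClassicalLimitRemark}) that the twist star product's semiclassical limit is the Poisson structure \eqref{eq:PiR} induced by the associated $r$-matrix. Your write-up simply makes these implicit steps explicit.
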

\begin{corollary}
    \label{cor:TwistHomogeneous}%
    Let $ (M, \omega)$ be a connected and compact symplectic manifold
    with a star product $\star_{\mathcal{F}}$ induced by a twist.
    Then $M$ is a homogeneous $G_r$-space.
\end{corollary}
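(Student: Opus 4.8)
The plan is to reduce everything to Corollary~\ref{corollary:TwistHomogeneous}, whose only hypothesis beyond the ones given here is that the Lie algebra action underlying the twist integrates to a Lie group action. Accordingly the argument splits into three short steps: extract the classical data, integrate the action using compactness, and invoke the earlier corollary.

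First I would extract the classical data. By hypothesis $\star_{\mathcal{F}}$ is induced by a twist $\mathcal{F}$ on $\mathcal{U}(\lie{g})[[\hbar]]$ via a Hopf algebra action $\acts$. Passing to the classical limit as in Remark~\ref{remark:ImportantClassicalLimitRemark}, the twist yields an $r$-matrix $r \in \lie{g} \wedge \lie{g}$ (Proposition~\ref{prop:TwistR}), the action $\acts$ restricts on $\lie{g}$ to a Lie algebra action $\phi\colon \lie{g} \to \Secinfty(TM)$ by vector fields, and by Lemma~\ref{lem:StarTwist} the Poisson structure quantized by $\star_{\mathcal{F}}$ is the image of $r$ under $\phi$, i.e.\ it is given by \eqref{eq:PiR}. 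Since $\star_{\mathcal{F}}$ is a star product on the symplectic manifold $(M, \omega)$, this Poisson structure is the non-degenerate bivector $\pi$ inverse to $\omega$. Thus the triple $(\phi, r, \pi)$ with $\pi$ the symplectic Poisson tensor of $\omega$ satisfies the hypotheses of Theorem~\ref{thm:Main}, except possibly for the integrability of $\phi$.

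Second --- and this is the only place compactness enters --- I would argue that $\phi$ does integrate. On a compact manifold every vector field is complete, so the finite-dimensional Lie algebra $\phi(\lie{g}) \subseteq \Secinfty(TM)$ consists of complete vector fields. By the classical integrability theorem for infinitesimal Lie algebra actions by complete vector fields (Palais), $\phi$ integrates to an action $\Phi\colon G \times M \to M$ of the connected, simply connected Lie group $G$ with Lie algebra $\lie{g}$. With this in hand, Corollary~\ref{corollary:TwistHomogeneous} applies verbatim: equivalently, the restricted action $\Phi|_{G_r}$ of the symplectic Lie subgroup is locally transitive by Lemma~\ref{lem:LocTransitive}, and since $M$ is connected it is transitive by Lemma~\ref{lem:Homogeneous}, so $M$ is a homogeneous $G_r$-space.

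The only genuine obstacle is the integration step, and even that is soft: the content is entirely in invoking completeness on a compact manifold together with Palais' theorem, with no hard estimate or construction to perform. One should double-check that no subtlety arises from $G_r$ being merely an immersed, possibly non-closed Lie subgroup of $G$; but this is already accounted for in Lemma~\ref{lem:LocTransitive} and Theorem~\ref{thm:Main}, where $G_r$ is precisely the Lie subgroup integrating $\lie{g}_r$, so nothing new is required here.
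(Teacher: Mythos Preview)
Your argument is correct and matches the paper's own proof: the only new ingredient beyond Corollary~\ref{corollary:TwistHomogeneous} is that compactness makes the fundamental vector fields complete, so Palais' theorem integrates the Lie algebra action to a Lie group action. The paper states exactly this in one sentence; your more detailed unpacking of the classical-limit data and the remark on $G_r$ are sound but not needed for the proof.
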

\begin{proof}
    If we assume compactness then any Lie algebra action comes from a
    Lie group action by Palais' theorem \cite{palais:1957a} since the
    fundamental vector fields are necessarily complete.
\end{proof}

We can even say something more about the homogeneous space. There is
of course a certain redundancy in the Lie group at the moment: the
action could have a large kernel. The following statement shows that
we can pass from a transitive action to a transitive and effective
action without loosing the $r$-matrix:
\begin{proposition}
    \label{prop:TransitiveEffective}%
    Let $\Phi\colon G \times M \longrightarrow M$ be a smooth Lie
    group action on a manifold $M$ with kernel
    \begin{equation}
        \ker \Phi
        =
        \{g \in G \; | \; \Phi_g = \id_M \},
    \end{equation}
    and let $\ker \phi \subseteq \lie{g}$ be the corresponding Lie
    ideal.
    \begin{propositionlist}
    \item \label{item:PreserveTransGetEff} If the action $ \Phi$ is
        transitive, the induced action
        \begin{equation}
            \label{QuotientAction}
            \Psi\colon
            G \big/\ker \Phi \times M \ni ([g], x)
            \; \mapsto \;
            \Phi(g, x) \in M
        \end{equation}
        of the quotient Lie group $G/\ker \Phi$ on $M$ is effective
        and still transitive.
    \item For a classical $r$-matrix $r \in \Anti^2\lie{g}$ the image
        $[r] \in\Anti^2 \lie{g}\big/ \ker \phi$ is a classical
        $r$-matrix for $\lie{g} \big/ \ker\phi$.
    \item The induced Poisson structures of $r$ and $[r]$ on $M$
        coincide.
    \end{propositionlist}
\end{proposition}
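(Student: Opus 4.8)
The plan is to dispatch the three parts in turn; each amounts to transporting structure along the quotient maps $G \to G/\ker\Phi$ and $\lie{g} \to \lie{g}/\ker\phi$. For part \refitem{item:PreserveTransGetEff} I would first observe that $\ker\Phi = \bigcap_{x \in M} \{g \in G \mid \Phi(g,x) = x\}$ is closed, being an intersection of closed subsets, and normal, since $\Phi_{hgh^{-1}} = \Phi_h \Phi_g \Phi_h^{-1} = \id_M$ for $g \in \ker\Phi$ and $h \in G$. By the closed subgroup theorem $G/\ker\Phi$ is then a Lie group with $\mathrm{pr}\colon G \to G/\ker\Phi$ a submersion. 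The map $\Psi$ of \eqref{QuotientAction} is well defined because $\Phi(gk,x) = \Phi(g,\Phi(k,x)) = \Phi(g,x)$ for $k \in \ker\Phi$, and it is smooth because $\Psi \circ (\mathrm{pr} \times \id_M) = \Phi$ while $\mathrm{pr} \times \id_M$ is a surjective submersion. Since the $\Psi$-orbits coincide with the $\Phi$-orbits, transitivity passes to $\Psi$; and $\ker\Psi = \{[g] \mid g \in \ker\Phi\} = \{[e]\}$, so $\Psi$ is effective.

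For part \textit{ii.)} everything happens on the Lie algebra level: $\ker\phi$ is the Lie algebra of the closed normal subgroup $\ker\Phi$, hence the ideal named in the statement, so the projection $\pi_\phi\colon \lie{g} \to \lie{g}/\ker\phi$ is a homomorphism of Lie algebras. Its extension $\Anti^\bullet\pi_\phi$ to the Grassmann algebras is then a homomorphism of Gerstenhaber algebras, i.e.\ it intertwines the Schouten brackets — this is the functoriality of $\lie{g} \mapsto (\Anti^\bullet\lie{g}, \wedge, \Schouten{\argument,\argument})$, which one may also verify by hand in an adapted basis from the coordinate formula for $\Schouten{\argument,\argument}$. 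Hence $\Schouten{[r],[r]} = \Anti^\bullet\pi_\phi(\Schouten{r,r}) = 0$, and since $[r]$ visibly lies in $\Anti^2(\lie{g}/\ker\phi)$, it is a classical $r$-matrix.

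For part \textit{iii.)} the key point is that, by the very definition of $\ker\phi$, the action $\phi$ factors as $\phi = \psi \circ \pi_\phi$ through a Lie algebra action $\psi\colon \lie{g}/\ker\phi \to \Secinfty(TM)$, and $\psi$ is exactly the fundamental action of $\Psi$. Picking a basis $\{e_1, \ldots, e_n\}$ of $\lie{g}$ with $\{e_{m+1}, \ldots, e_n\}$ a basis of $\ker\phi$, the components of $[r]$ in the induced basis of $\lie{g}/\ker\phi$ are the $r^{ij}$ with $i,j \leq m$, while $\phi(e_{m+1}) = \cdots = \phi(e_n) = 0$; substituting into \eqref{eq:PiR} shows that the bivector built from $[r]$ via $\psi$ equals the one built from $r$ via $\phi$ at every point of $M$. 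Invariantly: $(\psi\wedge\psi)([r]) = (\psi\wedge\psi)\circ(\pi_\phi\wedge\pi_\phi)(r) = (\phi\wedge\phi)(r) = \pi$, the functoriality of the exterior square being used once more.

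I do not expect any step to pose a serious obstacle. The only point calling for a little care is in part \refitem{item:PreserveTransGetEff}: one must know that $\ker\Phi$ is genuinely a closed, hence embedded Lie, subgroup so that $G/\ker\Phi$ is a Lie group and $\Psi$ descends smoothly. Granting this, parts \textit{ii.)} and \textit{iii.)} are formal consequences of the functoriality of the exterior algebra and of the factorization $\phi = \psi \circ \pi_\phi$.
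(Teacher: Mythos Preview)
Your proposal is correct and follows the same approach as the paper, which simply declares parts \textit{i.)} and \textit{ii.)} ``clear'' and handles part \textit{iii.)} by the one-line observation that applying $\phi\wedge\phi$ to $r$ kills any contribution from $\ker\phi$. You have merely spelled out the details the paper omits, including the invariant formulation $(\psi\wedge\psi)([r]) = (\phi\wedge\phi)(r)$ that is exactly the paper's reasoning.
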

\begin{proof}
    The first part is clear and so is the second. Since the Poisson
    structure on $M$ is obtained by applying $\phi \wedge \phi$ to
    $r$, the contributions in the kernel $\ker \phi$ will not
    contribute and hence $[r]$ yields the same Poisson structure.
\end{proof}
\begin{corollary}
    \label{corollary:NicyfyTheAction}%
    Let $(M, \omega)$ be a compact symplectic manifold such that the
    Poisson structure is induced by an $r$-matrix. Then there is a
    non-degenerate $r$-matrix $r \in \Anti^2 \lie{g}$ in a Lie algebra
    $\lie{g}$ with corresponding Lie group $G$ acting transitively and
    effectively on $M$.
\end{corollary}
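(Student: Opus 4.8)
The plan is to reduce Corollary~\ref{corollary:NicyfyTheAction} to the results already established, chaining them in the following order. First I would invoke Lemma~\ref{lem:StarTwist} together with Remark~\ref{remark:ImportantClassicalLimitRemark}: the hypothesis that the Poisson structure of $(M,\omega)$ is induced by an $r$-matrix $r \in \lie{g}\wedge\lie{g}$ for some Lie algebra $\lie{g}$ means precisely that there is a Lie algebra action $\phi$ of $\lie{g}$ on $M$ with $\pi$ given by \eqref{eq:PiR}, where $\pi$ is the symplectic Poisson tensor of $\omega$. This is the starting datum I need.

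Second, since $M$ is compact, every fundamental vector field $\phi(\xi)$ is complete, so by Palais' theorem \cite{palais:1957a} the action $\phi$ integrates to a Lie group action $\Phi\colon G \times M \longrightarrow M$ of a connected Lie group $G$ with Lie algebra $\lie{g}$ (as in the proof of Corollary~\ref{cor:TwistHomogeneous}). A compact symplectic manifold is a disjoint union of connected compact symplectic manifolds, so it is harmless to assume $M$ connected; I would note this reduction explicitly. Third, I apply Theorem~\ref{thm:Main}: the connected symplectic manifold $M$ with the integrated action $\Phi$ and $\pi$ as in \eqref{eq:PiR} is then a homogeneous space for the Lie subgroup $G_r \subseteq G$ corresponding to the symplectic Lie subalgebra $\lie{g}_r$, which by Proposition~\ref{prop:EtingofSubalg}~\refitem{item:rInhrwedgehr} and \refitem{item:rNondegInhr} contains $r$ and makes $r$ non-degenerate. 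Thus $G_r$ acts transitively on $M$ and $r \in \Anti^2\lie{g}_r$ is non-degenerate.

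Finally, $G_r$ need not act effectively, so I pass to the quotient using Proposition~\ref{prop:TransitiveEffective}. Setting $\lie{h} = \lie{g}_r \big/ \ker\phi|_{\lie{g}_r}$ with Lie group $H = G_r\big/\ker\Phi|_{G_r}$, part~\refitem{item:PreserveTransGetEff} gives that $H$ acts transitively and effectively on $M$, part~\emph{ii.)} gives that the image $[r] \in \Anti^2\lie{h}$ is an $r$-matrix for $\lie{h}$, and part~\emph{iii.)} gives that $[r]$ induces the same Poisson structure, hence $[r]$ is still non-degenerate since $\pi$ is symplectic. (Alternatively one observes that the kernel of the action intersected with $\lie{g}_r$ must be trivial precisely because $r$ is non-degenerate there and induces the symplectic $\pi$, so $\lie{g}_r$ already acts effectively; either way one lands on a non-degenerate $r$-matrix with transitive effective action.) Taking $\lie{g} := \lie{h}$ and $G := H$ in the statement finishes the proof. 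I do not anticipate a genuine obstacle here — the work has all been done in Theorem~\ref{thm:Main} and Proposition~\ref{prop:TransitiveEffective}; the only point requiring a line of care is checking that non-degeneracy of the $r$-matrix survives the passage to the quotient, which follows because the induced Poisson tensor is unchanged and is symplectic.
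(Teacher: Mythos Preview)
Your chain of reductions matches the paper's up to the very last step, but the point you yourself flag as ``requiring a line of care'' is where the argument actually breaks: neither justification you offer for the non-degeneracy of $[r]\in\Anti^2\lie h$ is valid. The inference ``$[r]$ induces the same (symplectic) $\pi$, hence $[r]$ is non-degenerate'' fails because the evaluation maps $\bar\phi_p\colon\lie h\to T_pM$ are only surjections, not isomorphisms, so non-degeneracy of $\pi_p$ does not propagate back to $[r]$. Equivalently, $[r]$ is non-degenerate on $\lie g_r/K$ if and only if the kernel $K=\ker\phi|_{\lie g_r}$ is a \emph{symplectic} subspace for the form inverse to $r$, and there is no reason an ideal contained in the kernel of the action should be symplectic---so your alternative claim that $K=0$ is equally unjustified. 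A concrete (non-compact, but it is the logic that is at issue) example: on the four-dimensional Lie algebra with basis $e_1,\dots,e_4$, single non-trivial bracket $[e_1,e_3]=e_2$, and $r=e_1\wedge e_2+e_3\wedge e_4$ (non-degenerate, and $\Schouten{r,r}=0$), the action $e_1\mapsto\partial_x$, $e_2\mapsto\partial_y$, $e_3\mapsto x\,\partial_y$, $e_4\mapsto 0$ on $\mathbb R^2$ has $K=\spann\{e_4\}$, an isotropic line; the quotient is three-dimensional with $[r]=[e_1]\wedge[e_2]$ degenerate, yet the induced $\pi=\partial_x\wedge\partial_y$ is symplectic.

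The paper avoids this by reversing the order of your last two moves: it first quotients by the kernel to obtain an effective transitive action carrying a (possibly degenerate) $r$-matrix that still induces $\pi$, and only \emph{then} passes to the symplectic subalgebra of the quotient via Proposition~\ref{prop:EtingofSubalg}. That final passage yields non-degeneracy by construction, transitivity persists by Lemma~\ref{lem:LocTransitive}, and effectiveness is inherited since one is restricting an already effective action to a subgroup. Your argument is repaired by inserting this one additional application of Proposition~\ref{prop:EtingofSubalg} at the end.
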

\begin{proof}
    By Theorem~\ref{thm:Main} we can assume that there is an
    $r$-matrix $r \in \Anti^2 \tilde{\lie{g}}$ in some Lie algebra
    $\tilde{\lie{g}}$ such that the corresponding group $\tilde{G}$
    acts transitively on $M$ via $\Phi$. Then we can quotient by the
    kernel of the action $\Phi$ to obtain a transitive and effective
    action of $G = \tilde{G} \big/ \ker \Phi$, still having a
    classical $r$-matrix inducing the same Poisson bracket. Finally,
    we can pass to the symplectic Lie subalgebra $\lie{g}_r$ and take
    the Lie subgroup $G_r$ of $G$ which still acts transitively and
    effectively.
\end{proof}

%
%

\subsection{Counterexamples: Pretzel surfaces}

The above results provide a large class of examples in which star
products can not be induced via a twist.  The first class of examples
that we discuss here is given by manifolds which can not be written as
homogeneous spaces at all. For this reason, it is important to recall
some useful characterization of homogeneous spaces in terms of Euler
characteristic $\chi(M)$.  The following classical theorem of Mostow
gives now a necessary condition for a manifold to be homogeneous
\cite{mostow:2005a}, see \cite{mostow:1950a} for the case of surfaces:
\begin{theorem}[Mostow]
    \label{thm:Mostow}%
    Let $M = G/H$ be a connected compact homogeneous space. Then
    $\chi(M) \geq 0$.
\end{theorem}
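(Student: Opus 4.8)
The plan is to reduce to a transitive action of a \emph{compact} connected Lie group and then to count the fixed points of a maximal torus by means of the Lefschetz--Hopf fixed point formula. First, replacing $G$ by its identity component does not affect transitivity, since $M$ is connected, so we may assume $G$ connected. The essential structural input --- and, I expect, the real obstacle --- is the passage to a compact group: by the classical structure theorem for transitive actions on compact manifolds (Montgomery, Mostow; \cite{mostow:2005a}), a maximal compact subgroup $K \subseteq G$ still acts transitively on $M$. Thus $M \cong K/H'$ with $K$ compact and connected and $H' = K \cap H$ closed, and it suffices to prove $\chi(M) \geq 0$ in this situation.

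Next I would fix a maximal torus $T \subseteq K$ and analyse the fixed-point set $M^{T}$ of the restricted action. Writing $M = K/H'$ one has $M^{T} = \{ gH' : g^{-1} T g \subseteq H' \}$. When this set is nonempty, $g^{-1}Tg$ is forced to be a maximal torus of $(H')^{0}$ for every such $g$ (its dimension equals $\mathrm{rank}(K) \geq \mathrm{rank}((H')^{0})$), so all of them are conjugate inside $(H')^{0}$; a short computation then shows that $N_{K}(T)$ acts transitively on $M^{T}$, whence $M^{T}$ is a finite set --- a quotient of the Weyl group $N_{K}(T)/T$ --- or empty. Choosing a topological generator $t$ of $T$ (which exists by Kronecker's theorem), we get $M^{t} = M^{T}$, so the diffeomorphism $\Phi_t \colon M \to M$ has only finitely many, isolated, fixed points.

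Finally I would identify $\chi(M)$ with this count. Because $T$ is connected, $\Phi_t$ is homotopic to $\id_M$, so its Lefschetz number equals $\chi(M)$, while the Lefschetz--Hopf formula gives $\chi(M) = \sum_{p \in M^{T}} \operatorname{sign} \det\bigl( \id - T_p \Phi_t \bigr)$. At each fixed point $p$ the isotropy representation of $T$ on $T_pM$ splits into two-dimensional rotation summands and a trivial summand isomorphic to $T_p(M^{T}) = \{0\}$; hence $T_p\Phi_t$ is a direct sum of planar rotations through angles $\theta_i \notin 2\pi\mathbb{Z}$ (using that $t$ generates $T$), and $\det(\id - T_p\Phi_t) = \prod_i (2 - 2\cos\theta_i) > 0$. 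Every local index is therefore $+1$, and we conclude $\chi(M) = \# M^{T} \geq 0$. As already indicated, the only nontrivial ingredient is the reduction to the compact case; once it is granted, the fixed-point count is elementary.
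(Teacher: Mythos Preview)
The paper does not prove Theorem~\ref{thm:Mostow}; it is quoted as a classical result of Mostow with references \cite{mostow:2005a,mostow:1950a}, so there is no ``paper's own proof'' to compare against. Your argument is the standard Hopf--Samelson fixed-point proof and is correct: once one has the Montgomery--Mostow reduction to a transitive action of a compact connected group $K$, the maximal-torus fixed set $M^{T}$ is finite, a topological generator $t\in T$ yields a map homotopic to the identity with exactly these fixed points, and the absence of trivial weights forces every local Lefschetz index to be $+1$, giving $\chi(M)=\#M^{T}\ge 0$.

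Two small remarks on the write-up. First, you correctly flag the reduction to compact $K$ as the substantive step; this is precisely Montgomery's theorem (a maximal compact subgroup of a connected Lie group acting transitively on a compact manifold already acts transitively), and it genuinely requires the Iwasawa/Cartan decomposition together with a topological argument, so it is appropriate to cite it rather than reprove it. Second, your justification that $M^{t}=M^{T}$ implicitly uses that $H'$ is closed: from $g^{-1}tg\in H'$ one gets $g^{-1}\overline{\langle t\rangle}g=g^{-1}Tg\subseteq H'$ only because $H'$ is closed in $K$; you have this since $H'=K\cap H$ with $H$ closed, but it is worth making explicit. With these points noted, the argument is complete.
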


From this we immediately get the following class of examples of
symplectic manifolds which do not allow a star product induced by a
twist:
\begin{example}[Pretzels]
    Let us consider the Pretzel surfaces $T(g)$ of genus
    $g\in\mathbb{N}_{0}$, which are compact and connected symplectic
    manifolds with Euler characteristic
    \begin{equation}
        \chi(T(g))
        =
        2-2g.
    \end{equation}
    For $g > 1$ the Euler characteristic is negative. Thus, according
    to Theorem~\ref{thm:Mostow} the Pretzel surfaces $T(g)$ are not
    homogeneous spaces for $g > 1$ at all.  As an immediate
    consequence of Corollary~\ref{cor:TwistHomogeneous}, no symplectic
    star product on $T(g)$ for $g > 1$ can be induced by a twist.
\end{example}

By the same line of argument, one can of course also construct
examples of compact symplectic manifolds in higher dimensions.

%
%

\subsection{Counterexample: Symplectic Sphere}

While Mostow's Theorem provides an easy way to rule out the higher
genus Pretzel surfaces, the situation for the symplectic sphere
$\mathbb{S}^2$ and the symplectic torus $\mathbb{T}^2$ is more
involved: for the torus one can use e.g. the canonical action of
$\mathbb{T}^2$ on itself. Then the abelian twist provides a star
product, the Weyl-Moyal star product, on the torus. So here we do not
have any obstructions.

Also the symplectic sphere is a very well-known example of a
homogeneous space.  Nevertheless, we can show that also in this case
no star product on the symplectic sphere can be induced by a
twist. The main idea is that even though the sphere is a homogeneous
space in several ways, none of them allows for a twist: one uses the
well-known classification of transitive and effective actions on
$\mathbb{S}^2$ and shows by hand that all of them come from semisimple
Lie groups.  The classification results we use are due to Onishchik
\cite{Onishchik1967, Onishchik1968, Onishchik1993} as well as earlier
work by Montgomery and Samelson \cite{Montgomery1942}:
\begin{theorem}[Onishchik]
    \label{theorem:SphereSemiSimple}%
    Any connected Lie group that acts transitively and locally
    effectively on $ \mathbb{S}^{2} $ is semisimple.
\end{theorem}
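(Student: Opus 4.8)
Since semisimplicity of a connected Lie group is a property of its Lie algebra, and local effectiveness means that the kernel $N$ of the action is discrete and hence central, $G$ is semisimple if and only if $G/N$ is. Replacing $G$ by $G/N$ we may therefore assume that the action $\Phi\colon G\times\mathbb{S}^2\to\mathbb{S}^2$ is effective, so that $\mathbb{S}^2=G/H$ for a closed subgroup $H$ and $\ker\Phi=\{e\}$. The plan is then to show that the radical of $G$ must be trivial.

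Let $R\subseteq G$ be the radical, the maximal connected normal solvable subgroup. Since $R$ is normal and $G$ acts transitively, $\Phi_g$ carries every $R$-orbit onto an $R$-orbit, so all $R$-orbits on $\mathbb{S}^2$ are diffeomorphic and in particular share a common dimension $d\in\{0,1,2\}$. If $d=2$, then $R$ itself acts transitively, so $\mathbb{S}^2=R/H'$ with $R$ connected solvable; as $\mathbb{S}^2$ is compact and simply connected, Montgomery's theorem on simply connected homogeneous spaces yields a maximal compact subgroup $T\subseteq R$ with $R=TH'$, so $T$ acts transitively on $\mathbb{S}^2$. But $T$ is a torus, whence $\mathbb{S}^2=T/(T\cap H')$ would be a quotient of a torus by a closed subgroup, hence again a torus --- contradicting $\pi_1(\mathbb{S}^2)=0$. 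If $d=1$, then $p\mapsto T_p(R\cdot p)$ is a smooth $G$-invariant rank-one subbundle of $T\mathbb{S}^2$, i.e.\ a line field; since every real line bundle on $\mathbb{S}^2$ is trivial it admits a nowhere-vanishing section, contradicting $\chi(\mathbb{S}^2)=2\neq 0$. Hence $d=0$: the connected group $R$ acts trivially on $\mathbb{S}^2$, so $R\subseteq\ker\Phi=\{e\}$ and $R=\{e\}$. Thus $G$ has trivial radical, i.e.\ $G$ is semisimple.

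The load-bearing facts are the constant-dimensionality of the orbits of the normal subgroup $R$ together with the topological rigidity of the $2$-sphere --- its nonzero Euler characteristic rules out a one-dimensional orbit foliation, and its trivial fundamental group rules out a transitive solvable (equivalently toral) action --- and I expect the clean exclusion of the case $d=2$ (the rigidity of compact homogeneous spaces of solvable groups, here packaged through Montgomery's theorem) to be the only genuinely delicate point. A more structural alternative, which is the route of Onishchik and, for compact groups, already of Montgomery and Samelson, is to observe first that a maximal compact subgroup $K\le G$ still acts transitively and effectively, then average a Riemannian metric over $K$ so that $K$ embeds into the identity component of the isometry group of a homogeneous, hence round, metric on $\mathbb{S}^2$, namely $SO(3)$; since $\lie{so}(3)$ has no two-dimensional subalgebra this forces $K\cong SO(3)$, and one then concludes by the classification of transitive effective actions on $\mathbb{S}^2$ --- these are $SO(3)$ and $\mathrm{PSL}(2,\mathbb{C})$, with local covers $SU(2)$ and $SL(2,\mathbb{C})$ --- all of which are semisimple.
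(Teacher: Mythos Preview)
The paper does not prove this theorem at all: it is quoted as a classification result from the literature (Onishchik, with antecedents in Montgomery--Samelson) and used as a black box. So there is no ``paper's own proof'' to compare against; your contribution is a self-contained argument where the paper only gives a citation.

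Your main argument --- analysing the orbit dimension $d\in\{0,1,2\}$ of the radical $R$ --- is correct. The reduction from locally effective to effective is sound (discrete normal subgroups of connected groups are central, and semisimplicity is a Lie-algebra property). The case $d=1$ is clean: the image of the constant-rank bundle map $\mathbb{S}^2\times\lie r\to T\mathbb{S}^2$, $(p,\xi)\mapsto\phi(\xi)_p$, is a smooth line subbundle, and since $H^1(\mathbb{S}^2;\mathbb{Z}/2)=0$ it is orientable, hence trivial, contradicting $\chi(\mathbb{S}^2)=2$. The case $d=2$ is handled correctly via Montgomery's theorem (which does apply, as $\mathbb{S}^2$ is compact and simply connected): a maximal compact of a connected solvable group is a torus, and a torus quotient cannot be simply connected of positive dimension. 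The case $d=0$ is immediate. This is a genuinely different and more elementary route than invoking Onishchik's full classification machinery; what it buys is that one never needs the list of transitive groups on spheres, only soft topology of $\mathbb{S}^2$ and Montgomery's theorem.

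One caution about your closing ``alternative'' paragraph: as written it is not a complete argument. Knowing that the maximal compact $K$ is isomorphic to $SO(3)$ does not by itself force $G$ to be semisimple (a priori $G$ could still have a nontrivial radical whose maximal compact subgroup is trivial), and the final appeal to ``the classification of transitive effective actions on $\mathbb{S}^2$'' is precisely the Onishchik result you are trying to prove. If you want to keep that paragraph, either drop the last clause and instead feed $K\cong SO(3)$ back into a radical argument, or simply present it as context for where the cited references come from rather than as an independent proof.
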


Using this classification result together with
Corollary~\ref{corollary:NicyfyTheAction} and
Proposition~\ref{prop:NoRMatrix} we arrive at the following statement
for the symplectic sphere:
\begin{proposition}
    \label{proposition:TheSphere}%
    A symplectic Poisson structure on $\mathbb{S}^2$ can not be
    induced by a classical $r$-matrix.
\end{proposition}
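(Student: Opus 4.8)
The plan is to argue by contradiction, combining the structural consequences of the main theorem with the semisimplicity forced by the topology of $\mathbb{S}^2$ and the non-existence of $r$-matrices on semisimple Lie algebras. Suppose a symplectic Poisson structure on $\mathbb{S}^2$ were induced by a classical $r$-matrix $r \in \lie{g} \wedge \lie{g}$ via a Lie algebra action $\phi$ of some finite-dimensional real Lie algebra $\lie{g}$, in the sense of \eqref{eq:PiR}. Since $\mathbb{S}^2$ is compact and connected, Corollary~\ref{corollary:NicyfyTheAction} applies: there is a non-degenerate $r$-matrix $r' \in \Anti^2 \lie{g}'$ in some Lie algebra $\lie{g}'$ whose corresponding Lie group $G'$ acts transitively and effectively on $\mathbb{S}^2$.

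Next I would invoke Onishchik's classification, Theorem~\ref{theorem:SphereSemiSimple}: any connected Lie group acting transitively and (locally) effectively on $\mathbb{S}^2$ is semisimple. Applying this to $G'$ (or, if one wants, to its identity component, which still acts transitively since $\mathbb{S}^2$ is connected), we conclude that $G'$ is semisimple, hence $\lie{g}'$ is a semisimple real Lie algebra. But Proposition~\ref{prop:NoRMatrix} states that a semisimple Lie algebra admits no non-degenerate $r$-matrices, contradicting the existence of $r' \in \Anti^2 \lie{g}'$ non-degenerate. Therefore no such $r$-matrix can induce a symplectic Poisson structure on $\mathbb{S}^2$.

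The main point to be careful about is the interface between Corollary~\ref{corollary:NicyfyTheAction} and Onishchik's theorem, namely matching the hypotheses: Corollary~\ref{corollary:NicyfyTheAction} delivers a transitive and effective action of $G_r$, while Onishchik's theorem is stated for connected groups acting transitively and locally effectively. Effectiveness implies local effectiveness, so the only gap is connectedness; this is handled by passing to the identity component $G_r^0$, which still acts transitively on the connected manifold $\mathbb{S}^2$ (a standard fact, since the orbits of $G_r^0$ are open, hence their union being $\mathbb{S}^2$ forces a single orbit), and whose Lie algebra is still $\lie{g}_r$, so the non-degenerate $r$-matrix $r$ persists. With that bookkeeping in place the argument is a clean three-step chain: (i) compactness plus connectedness gives a transitive effective action with a non-degenerate $r$-matrix via Corollary~\ref{corollary:NicyfyTheAction}; (ii) Onishchik forces the acting group to be semisimple; (iii) Proposition~\ref{prop:NoRMatrix} rules out the non-degenerate $r$-matrix on a semisimple Lie algebra, a contradiction.
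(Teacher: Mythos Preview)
Your argument is correct and follows exactly the route the paper indicates: combine Corollary~\ref{corollary:NicyfyTheAction} with Onishchik's Theorem~\ref{theorem:SphereSemiSimple} and Proposition~\ref{prop:NoRMatrix} to reach a contradiction. Your extra care about passing to the identity component to match Onishchik's connectedness hypothesis is a fine point the paper leaves implicit.
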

\begin{corollary}
    \label{corollary:NoTwistStarOnSphere}%
    There is no star product on the symplectic sphere $\mathbb{S}^{2}$
    induced by a twist.
\end{corollary}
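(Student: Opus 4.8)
The plan is to chain together the results already established in the paper. The statement to be proven, Corollary~\ref{corollary:NoTwistStarOnSphere}, asserts that there is no star product on the symplectic sphere $\mathbb{S}^2$ induced by a twist. The proof is a short deduction, essentially a contrapositive argument, and the real work has been front-loaded into Proposition~\ref{proposition:TheSphere} and the chain of lemmas leading to Corollary~\ref{corollary:NicyfyTheAction}.

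First I would argue by contradiction: suppose a star product $\star_{\mathcal{F}}$ on the symplectic sphere $\mathbb{S}^2$ is induced by a twist $\mathcal{F}$ via some Hopf algebra action $\acts$ of $\mathcal{U}(\lie{g})[[\hbar]]$. By Lemma~\ref{lem:StarTwist} (or Remark~\ref{remark:ImportantClassicalLimitRemark}), the semiclassical limit of $\star_{\mathcal{F}}$ is the symplectic Poisson structure of $\mathbb{S}^2$, and it is induced by the classical $r$-matrix $r \in \lie{g} \wedge \lie{g}$ associated to $\mathcal{F}$ via the corresponding Lie algebra action $\phi$. This already contradicts Proposition~\ref{proposition:TheSphere}, which states that a symplectic Poisson structure on $\mathbb{S}^2$ cannot be induced by a classical $r$-matrix. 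Hence no such $\star_{\mathcal{F}}$ exists.

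For completeness I would also recall why Proposition~\ref{proposition:TheSphere} holds, since the corollary is really its packaging. The sphere $\mathbb{S}^2$ is compact and connected, so if its symplectic Poisson structure were induced by an $r$-matrix, Corollary~\ref{corollary:NicyfyTheAction} would produce a non-degenerate $r$-matrix in some Lie algebra $\lie{g}$ whose Lie group $G$ acts transitively and effectively on $\mathbb{S}^2$. By Onishchik's classification, Theorem~\ref{theorem:SphereSemiSimple}, any connected Lie group acting transitively and (locally) effectively on $\mathbb{S}^2$ is semisimple, so $\lie{g}$ is semisimple. But Proposition~\ref{prop:NoRMatrix} says a semisimple Lie algebra admits no non-degenerate $r$-matrix — a contradiction.

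I do not anticipate a serious obstacle here: the proof is purely a matter of correctly invoking the already-proven statements in the right order and checking that their hypotheses are met (compactness and connectedness of $\mathbb{S}^2$, symplecticity of its Poisson structure, and the passage from "induced by a twist" to "semiclassical limit induced by an $r$-matrix"). The one point to state carefully is that the classical limit of the twist really is the $r$-matrix that induces the semiclassical Poisson structure of $\star_{\mathcal{F}}$, which is exactly the content of Lemma~\ref{lem:StarTwist} together with Remark~\ref{remark:ImportantClassicalLimitRemark}; with that in hand the corollary follows immediately from Proposition~\ref{proposition:TheSphere}.
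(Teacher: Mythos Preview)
Your proposal is correct and follows exactly the approach implicit in the paper: the corollary is immediate from Proposition~\ref{proposition:TheSphere} once one observes, via Lemma~\ref{lem:StarTwist} and Remark~\ref{remark:ImportantClassicalLimitRemark}, that a twist-induced star product has its semiclassical Poisson structure induced by the associated classical $r$-matrix. Your supplementary recap of why Proposition~\ref{proposition:TheSphere} holds (Corollary~\ref{corollary:NicyfyTheAction} plus Theorem~\ref{theorem:SphereSemiSimple} plus Proposition~\ref{prop:NoRMatrix}) matches the paper's reasoning verbatim.
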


%
%

{
  \footnotesize
  \renewcommand{\arraystretch}{0.5}

}

%
%


%
%

\end{document}